\documentclass[12pt]{article}
\usepackage[
  left=1in,
  right=1in,
  top=1in,
  bottom=1in
]{geometry}

\usepackage{graphicx} % For including images with \includegraphics
\graphicspath{{./images/}} % Set default image path
\usepackage{url} % For formatting URLs (useful if not using hyperref)
\usepackage{mathtools}
\usepackage[dvipsnames]{xcolor} % Extended color names
\usepackage[linesnumbered,ruled,algosection,vlined]{algorithm2e}
\SetKwInOut{postcondition}{Postcondition}
\SetKwInOut{precondition}{Precondition}
\SetKwInOut{correctness}{Successful if}
\SetArgSty{textrm}

\SetCommentSty{mycommfont}
\let\oldnl\nl% Store \nl in \oldnl
\newcommand{\nonl}{\renewcommand{\nl}{\let\nl\oldnl}}% Remove line number for one line

\definecolor{darkblue}{rgb}{0.0, 0.0, 0.55}
\definecolor{cobalt}{rgb}{0.0, 0.28, 0.67}

\newcommand{\demph}[1]{\emph{{\color{RoyalBlue}#1}}}

\usepackage{tikz}
\usetikzlibrary{3d}
\usepackage{tikz-3dplot}
\usepackage{pgfplots}
\usetikzlibrary{arrows.meta,positioning}

\usepackage{amsmath, amsfonts, amsthm, amssymb} % AMS math packages
\usepackage{mathtools} % Enhancements to amsmath
\usepackage[colorlinks=true, allcolors=blue]{hyperref}
 
\usepackage[nameinlink]{cleveref} % For smart cross-referencing (e.g., \cref{eq:1})
\theoremstyle{definition}
\newtheorem{theorem}{Theorem}

\newtheorem{remark}[theorem]{Remark}

\newtheorem{notation}[theorem]{Notation}

\newtheorem{corollary}[theorem]{Corollary}

\newtheorem{example}[theorem]{Example}
\usepackage{enumitem} % Customizes list environments
\makeatletter
\def\saveenum{\xdef\@savedenum{\the\c@enumi\relax}}
\def\resetenum{\global\c@enumi\@savedenum}
\makeatother

\usepackage{soul} % For highlighting text (e.g., \hl{})
\usepackage{framed} % For framed environments (e.g., leftbar)
\usepackage{subcaption} % For subfigures
\usepackage{booktabs} % For professional-quality tables
\usepackage{arydshln} % For dashed lines in arrays and tables

\newcounter{todo}

\makeatletter

\newcommand\listtodoname{List of todos}
\newcommand\listoftodos{%
  \section*{\listtodoname}\@starttoc{tod}}
\makeatother
\makeatletter
\let\saved@bibitem\@bibitem
\makeatother

\makeatletter
\let\@bibitem\saved@bibitem
\makeatother

\newcommand{\codim}[1]{\operatorname{codim}(#1)}

\newcommand{\UBigGroup}{\mathcal{U}}

\newcommand{\PP}{\mathbb{P}}
\newcommand{\CC}{\mathbb{C}}

\let\oldtextit\textit 
\renewcommand\demph[1]{\oldtextit{\color{RoyalBlue}#1}}
\definecolor{RoyalBlue}{cmyk}{1, 0.50, 0, 0}

\newcommand{\dPrime}{d'}

\newcommand{\bfu}{\mathbf{u}}

\newcommand{\EE}{\mathbb{E}}
\title{Rigid homotopies for sampling from algebraic varieties: a Waring structure complexity model}

\author{Abigail R. Jones, 
 Kisun Lee,
Jose Israel Rodriguez}
\date{}
\newcommand{\threeByThree}[9]{
        \begin{bsmallmatrix}
        #1& #2&#3\\
        #4& #5&#6\\
        #7& #8&#9\\
        \end{bsmallmatrix}}

\newcommand{\lambdazeros}{B} 

\makeatletter
\def\blfootnote{\gdef\@thefnmark{}\@footnotetext}
\makeatother
\newcommand{\intMeasure}[2]{\int_{#1}{#2{\,}}}
\begin{document}

\maketitle
\begin{abstract}
Polynomial system solving has seen major progress in both theory and practice over the past decade. 
A landmark achievement was addressing Smale's 17th problem, establishing average-case polynomial-time algorithms for computing approximate solutions of polynomial systems via homotopy continuation. 
Recent improvements in complexity bounds for these algorithms led to the development of rigid homotopy methods. 
In this article, 
we prove a new complexity result for rigid homotopies for polynomial systems with Waring representations of prescribed length. In addition, we provide the first computational experiments for rigid homotopies using a preliminary implementation. 
\end{abstract}

\blfootnote{\textup{2000} \textit{Mathematics Subject Classification}:
Primary 68Q25; Secondary 65H10, 65H14.}

\section{Introduction}

Polynomial system solving has seen major progress in both theory and practice over the past decade. A central driving force of these advances is \demph{homotopy continuation}, which has emerged as an efficient tool for large-scale computational problems. On the theoretical side, homotopy continuation played a key role in addressing Smale’s 17th problem, which asks whether a root of a system of $n$ complex polynomial equations can be approximated, on average, in polynomial time by a uniform algorithm \cite[Problem 17]{smale1998mathematical}. A sequence of works \cite{beltran2008smale,beltran2011fast,burgisser2011problem,shub2009complexity}, culminating in \cite{lairez2017deterministic}, established an affirmative answer and positioned homotopy continuation as a central interest in the complexity theory of numerical algebraic geometry. In particular, this line of work studies the number of arithmetic operations required to compute a root by homotopy continuation \cite{beltran2013robust,beltran2011fast,beltran2009complexity,hauenstein2016certified,lee2025priori}.

At the same time, independently of these developments, homotopy continuation has emerged as a practical tool for solving large and structured polynomial systems, \cite{duff2023plmp,lee2023polyhedral,lindberg2025estimating,lindberg2022distribution} to name a few. These practical successes highlight a different aspect of homotopy continuation as a computational paradigm.

Despite their common foundation, these two perspectives, complexity-theoretic and computational, have evolved with different emphases. The former focuses on average-case guarantees under randomized dense models for finding a single root, while the latter is driven by structured systems, empirical performance, and often the computation of several roots at once. 
As a result, the underlying problem formulations differ: the homotopy continuation setting used in complexity analysis may not directly reflect that used in practice, and vice versa.

We investigate the practical potential of \demph{rigid homotopy} \cite{BCL2023-rigid-two,Lairez2020-rigid-one} as an effort to bridge this gap. 
In practice, polynomial systems are often structured; 
we focus in particular on systems with \demph{Waring-type representations}. 
In this setting, we develop a complexity analysis of these structured inputs, provide the first implementation of rigid homotopy for this model, and support our study with computational experiments that explore its behavior in practice.

The rigid homotopy emerged from efforts to refine the complexity analysis of homotopy continuation in the context of Smale’s 17th problem. In \cite{Lairez2020-rigid-one}, a rigid homotopy algorithm was introduced to solve random dense polynomial systems in quasi-linear average complexity in terms of the number of all monomials in the system. The sequel \cite{BCL2023-rigid-two} suggests an insight that complexity may depend on how the polynomials themselves are represented and evaluated. For systems in \demph{algebraic branching programs} (ABPs) with low evaluation complexity, \cite{BCL2023-rigid-two} refines the analysis using the \demph{black-box evaluation model}, where the cost is measured by the number of evaluation queries, denoted $e(f)$, rather than the number of monomials. For Gaussian random ABPs of degree $D$ in $n$ variables, it is shown that the associated condition measure admits a polynomial bound in $n$ and $D$ \cite[Theorem 1.5]{BCL2023-rigid-two}. Combined with the general complexity estimate, this implies that such systems can be solved in polynomial time on average \cite[Corollary 1.3]{BCL2023-rigid-two}.

We consider Waring-type representations as a natural model of structured inputs with low evaluation complexity. While they differ from ABPs, they admit efficient evaluation and arise naturally in applications, for instance in polynomial neural networks \cite{FRWY-activation-threshold,KTB2019,KLW2024}, which consist of linear transformations followed by power-type nonlinearities. 

Our main result shows that for 
random degree $D$ polynomials in $n+1$ variables with a Waring decomposition of length $r$, the expected condition number is polynomially bounded in $n$ and $D$, up to a factor depending on $r$ (\Cref{thm:waring-condition-bound}). Moreover, this factor approaches $1$ as $r \to \infty$, so that the resulting bound matches the polynomial dependence on $n$ and $D$.

\Cref{thm:waring-condition-bound} highlights that the favorable complexity behavior of rigid homotopy extends beyond dense or ABP-based models to a natural class of structured polynomials. Our analysis shows that such a structure may still achieve polynomial average complexity, with a dependence on the decomposition length $r$, and that this additional cost vanishes as $r$ grows. In this way, the result supports the practicality of rigid homotopy for a broader class of polynomial systems, with respect to representation efficiency.

This paper is organized as follows. In \Cref{s:homotopy-continution} we recall important facts of homotopy continuation. This includes the definition of the input-output correspondence for rigid homotopies and solution paths. In \Cref{s:previous-rigid-homotopy-algorithm} we provide an overview of continuation algorithms (\Cref{alg:rigid_continuation,alg:rigid_sampling,alg:gamma-estimate}) based on \cite{BCL2023-rigid-two}. The bound of the average complexity for these algorithms for a polynomial in Waring representation is established in \Cref{s:waring-section}. In~\Cref{s:comp-results}, we present a proof of concept implementation of the rigid homotopy and report computational results. These are used to motivate the future directions discussed in the conclusion, \Cref{s:conclusion-outlook}.

\section{Homotopy continuation}\label{s:homotopy-continution}

We review homotopy continuation and introduce the rigid homotopy framework. The focus is on the input-output correspondence 
underlying continuation methods and on the construction of rigid homotopy paths.
We then recall the $\gamma$-number, which is a key tool for analyzing the complexity of continuation algorithms.

\subsection{Straight-line homotopies}

We consider homotopy continuation in the projective setting, following the framework developed in \cite{lairez2017deterministic}. Let $H_d$ denote the complex vector space of homogeneous polynomials of degree $d$ in the variables $z_0,\dots,z_n$. Equip $H_d$
with the \demph{Bombieri-Weyl Hermitian inner product}, for which the monomial basis $\{z_0^{a_0}\cdots z_n^{a_n}\}$ is orthogonal and satisfies 
\[\|z_0^{a_0}\cdots z_n^{a_n}\|_W^2=\frac{a_0!\cdots a_n!}{{(a_0+a_1+\cdots+a_n)!}}.\] 
Therefore, $H_{d_1}\times \cdots \times H_{d_n}$ is the complex vector space of homogeneous polynomial systems in $n+1$ variables with degrees $d_1,\dots,d_n$, respectively. This space has the Hermitian inner product induced from the Hermitian inner product of each factor. We denote by $\mathbb S\subset H_{d_1}\times \cdots \times H_{d_n}$ the corresponding unit sphere, consisting of all systems of norm~$1$. 

The Hermitian inner product induces a Riemannian metric on $\mathbb S$, which we denote by $d_{\mathbb S}$. For $f,g\in\mathbb S$, the distance $d_{\mathbb S}(f,g)$ is given by the angle between $f$ and $g$, that is, $\cos d_{\mathbb S}(f,g)=\Re\langle f,g\rangle$, where $\Re$  denotes the real part of $\langle f,g\rangle$. Given a start system $g\in\mathbb S$ and a target system $f\in\mathbb S$, the homotopy path $h_t$ is defined as the geodesic on the unit sphere of the coefficient space connecting $g$ and $f$:
\begin{equation}\label{eq:geodesic-homotopy}
    h_t:= \frac{\sin(t\alpha)}{\sin\alpha}g+\frac{\sin((1-t)\alpha)}{\sin \alpha}f
\end{equation}
where $\alpha=d_{\mathbb S}(f,g)\in [0,\pi)$ is the spherical distance between $f$ and $g$.

The geodesic homotopy \eqref{eq:geodesic-homotopy} is projectively equivalent to the linear homotopy $tg + (1-t) f$ in the affine space $\mathbb C^n$. In practical implementations of homotopy continuation, it is common to fix a start system $g$ that is easy to solve, such as a B\'ezout start system $g=(z_1^{d_1}-1,\dots,z_n^{d_n}-1)$.

However, fixing a particular start system is not suitable for condition-based complexity analysis, which underlies Smale’s 17th problem. In this setting, the objective is to analyze the average complexity of solving a random target system, without relying on the solution of any specific start system. For this reason, randomized constructions were introduced to sample a root from a suitably distributed start system (so-called \demph{good start pair}). See, for instance, \cite[Section 2.3]{beltran2011fast} and \cite[Section 2.3]{lairez2017deterministic}.

In general, a homotopy continuation involves a parameter space of inputs and a corresponding output space. Together, these form an \demph{input-output correspondence}, also known as an incidence variety or solution space: each parameter value yields a system which determines the associated solutions. For example, in the straight-line homotopy \eqref{eq:geodesic-homotopy}, the input space is the coefficient sphere $\mathbb{S}$, and the output space is~$\PP^n$.

In straight-line continuation, the homotopy explores the full space $\mathbb{S}$, so each step may move the system in an arbitrary direction. This is natural, but it also creates opportunities for the path to pass near ill-conditioned regions, leading to small step sizes or even path failures in numerical methods. It motivates a continuation strategy that restricts the parameter space to produce paths with more controlled geometric behavior.

\subsection{Rigid homotopies}\label{ss:rigid-homotopy-background}

The \demph{rigid homotopy} replaces the input space by the product of unitary groups
\[\UBigGroup := U(n+1)^n,\]
where $U(n+1)$ denotes the group of $(n+1)\times(n+1)$ unitary matrices, and the output space remains $\PP^n$.

\newcommand{\startRoot}{\zeta}
\newcommand{\rigidCorrespondence}[1]{\mathcal{R}(#1)}
\newcommand{\gamPath}{\phi}

Fix a homogeneous polynomial system $f=(f_1,\dots,f_n)$ with $f_i \in H_{d_i}$. We regard its zero set $V(f)$ as a subset of projective space $\PP^n$. For $\bfu=(u_1,\dots,u_n)\in \UBigGroup$, define the action
\[\bfu\cdot f := (f_1\circ u_1^{-1},\dots,f_n\circ u_n^{-1})\] 
where each $u_i$ acts only on the variables of $f_i$. The \demph{rigid input-output correspondence} associated to $f$ is then
\begin{equation}\label{eq:input-output-correspondence}
\rigidCorrespondence{f}:=\left\{(\bfu,\zeta)\in \UBigGroup\times \PP^n\mid(\bfu\cdot f)(\zeta)=0\right\}.
\end{equation}
The input space $\UBigGroup$ has dimension fixed no matter the degree of $f$, $\dim_\mathbb{R} U(n+1)^n=n(n+1)^2$. On the other hand, the space of $n$ homogeneous polynomials of degree $D$ in $n+1$ variables has dimension $n\binom{n+D}{D}$. Hence, rigid homotopies restrict the deformation to a low-dimensional family. This preserves much of the system's geometric structure and reduces the likelihood of encountering ill-conditioned regions along the path.

Similar to the straight-line homotopy, a rigid homotopy path is specified by a continuous geodesic in the input space. Since the input space of $\rigidCorrespondence{f}$ is an algebraic group with a Lie algebra, a path in the rigid homotopy can be described by skew-Hermitian matrices. We define the space of $(n+1)\times (n+1)$ skew-Hermitian matrices
\[
\mathfrak{u}(n+1)
:=
\{X \in M_{n+1}(\mathbb{C}) \mid X^H = -X \}
\]
where $X^H$ denotes the conjugate transpose of $X$. Then, the Lie algebra of $\UBigGroup$ is $\mathfrak{u}(n+1)^n$. For $A=(a_1,\dots,a_n)\in\mathfrak{u}(n+1)^n$ with $\frac{1}{2}\sum_i\|a_i\|_{\mathrm{Frob}}^2\leq 1$, we define the \demph{rigid homotopy parameter path} from $\exp(A)\in\UBigGroup$ to the identity in $\UBigGroup$ by
\begin{equation}\label{eq:path-parameter-space}
\gamPath:[0,1]\to \UBigGroup,\quad t\mapsto 
\exp(tA)=(\exp(ta_1),\dots,\exp(ta_n)).\footnote{Our convention is to take $t=1$ to $t=0$ so $\gamPath(0)$ is the endpoint of the path unlike that of \cite{BCL2023-rigid-two,Lairez2020-rigid-one}}
\end{equation}
Note that we impose $\frac{1}{2}\sum_i\|a_i\|_{\mathrm{Frob}}^2\leq 1$ to ensure that the path has bounded speed. The path $\gamPath(t)$ in the input space induces a corresponding family of polynomial maps $h_t := \gamPath(t)\cdot f$. Since $h_0 = f$, tracking from $t=1$ to $t=0$, the solution analytically continues to $f=0$ along this family. We write $\zeta_t$ for the solution to $h_t=0$ obtained by analytic continuation. This produces a lifted path in the rigid input-output correspondence:
\begin{equation}\label{eq:lifted-path}
t \mapsto \left(\gamPath(t),\zeta_t\right)\in \rigidCorrespondence{f}.
\end{equation}
By construction $h_t(\zeta_t)=0$ for all $t\in[0,1]$, and we call this path a \demph{rigid homotopy solution path} with the \demph{target solution} $\zeta_0$ of the solution path.

\begin{example}\label{ex:rigid-solution-path}
For example, consider the polynomial system $f=(f_1,f_2)\in H_2\times H_4$ where

\newcommand{\plugx}{z_2}
\newcommand{\plugy}{z_1}
%% deliberately set plugx to z_2

\newcommand{\plugz}{z_0}
\newcommand{\rone}{.9}
\newcommand{\cyone}{1.88}
\newcommand{\cxone}{3.2}
\newcommand{\xzero}{(-.4)}

$f_1=\left(\frac{\plugx-\xzero \plugz}{\cxone}\right)^{2}+\left(\frac{\plugy}{\cyone}\right)^{2}-(\rone \plugz)^{2}$ and

\newcommand{\rtwo}{.85}
\newcommand{\cy}{8.7}
\newcommand{\cshift}{.1}
\newcommand{\cx}{4.25}

$f_2=
\left(\plugx^{2}+\left(\frac{\plugy}{\cy}\right)^{2}-(\rtwo \plugz)^{2}\right)
\left(\left(\frac{\plugx}{\cx}\right)^{2}+\plugy^{2}-(\rtwo \plugz)^{2}\right)+(\cshift\plugz)^4.$

The intersection of $V(f_1)$ and $V(f_2)$ are the eight points of intersection of
the blue ellipse and
the silver quartic curve as seen in \Cref{fig:rigid-homotopy-solution-path}. 
The purple surface is contained in the rigid homotopy correspondence where the horizontal axis is the $t$-axis.
Let $A_1$ be the zero matrix and $A_2=\threeByThree{0}{0}{0}{0}{0}{\theta}{0}{-\theta}{0}$. 
Each orange curve denotes a solution path in $\rigidCorrespondence{f_1,f_2}$ and has a target solution in $V(f_1,f_2)$.
\end{example}

The construction of such a path depends on a good start pair of the rigid homotopy. In practice, the good start pair is not fixed deterministically but is obtained through randomized constructions (see \Cref{ss:sampling}).

\begin{figure}[hbt!]
    \centering
    \includegraphics[width=0.6\linewidth]{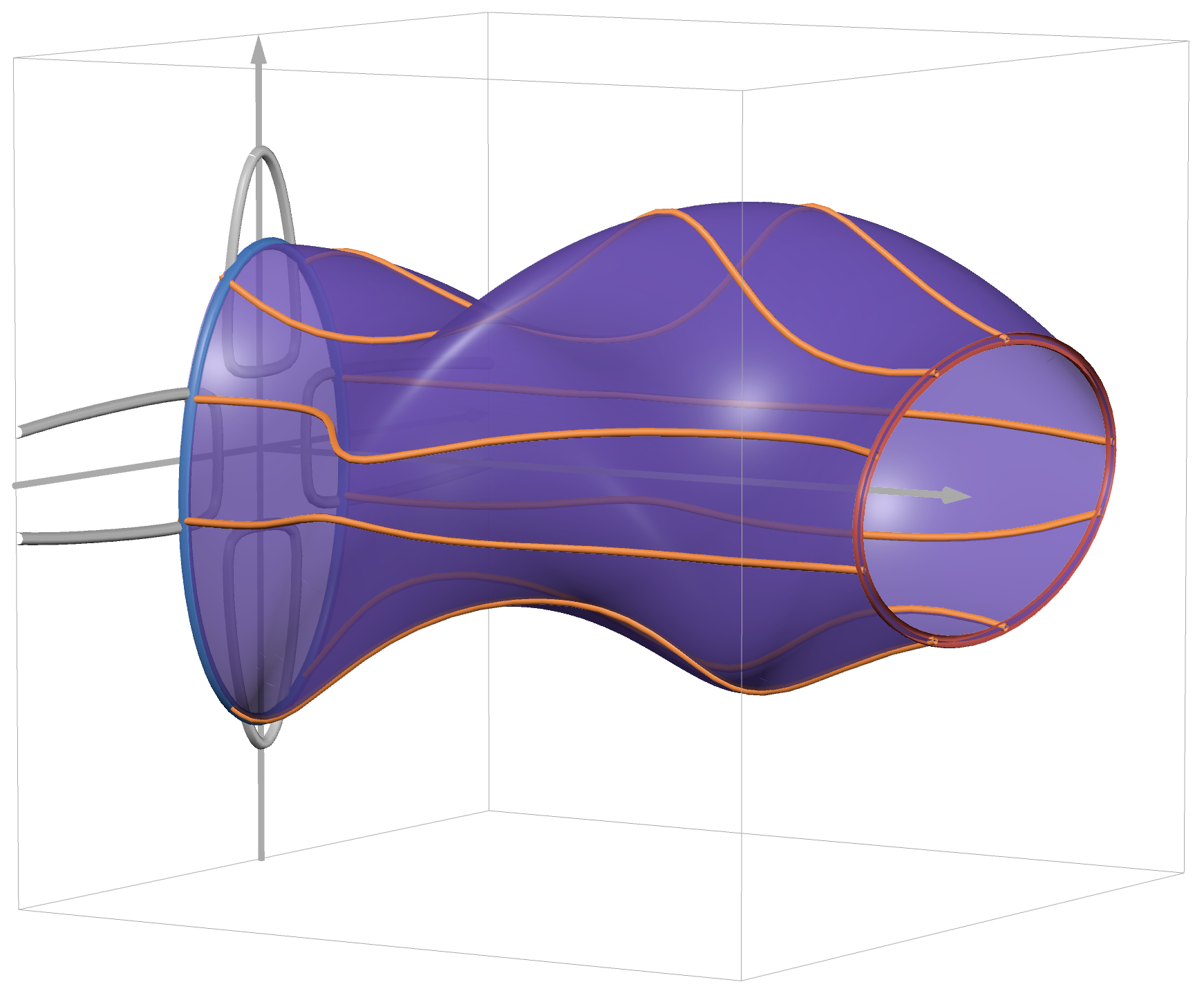}
    %\url{https://www.desmos.com/3d/wbyxon89hg}
    %\includegraphics[width=0.5\linewidth]{-code_images/rigid-homotopy-solution-path.png}
    \caption{Eight rigid homotopy solution paths where $V(f_1)$ is the ellipse and $V(f_2)$ is a quartic curve on the affine chart where $z_0=1$.}
    \label{fig:rigid-homotopy-solution-path}
    %\url{https://www.desmos.com/3d/twmvovh4r1}
\end{figure}

\subsection{Condition numbers and $\gamma$-numbers}\label{ss:gamma-numbers}

\newcommand{\gammaFrob}{\gamma_{\operatorname{Frob}}}
\newcommand{\gammaHatFrob}{\hat{\gamma}_{\text{Frob}}}

To track a solution path $\zeta_t$ of the homotopy $h_t$ in general, we iteratively advance along the path and apply Newton's method to recover the nearby exact solution. We recall the quantities that govern the convergence of Newton's method.

For a homogeneous polynomial $f\in H_d$ and a point $\zeta\in \PP^n$, we define \demph{Smale's $\gamma$-number} \cite[Chapter 8]{blum2012complexity} by
\begin{equation}\label{eq:gamma}
\gamma(f,\zeta):=\sup_{k\geq 2}\left\|\frac{1}{k!}d_zf(\zeta)^{\dagger}d_z^kf(\zeta)\right\|^{\frac{1}{k-1}}   
\end{equation} 
where $d_z f(\zeta)^{\dagger}$ denotes the Moore-Penrose pseudo-inverse of $d_z f(\zeta)$ that acts as the inverse of $d_zf(\zeta)$ on $\zeta^\perp$, and $d_z^k f(\zeta)$ is a symmetric tensor whose entries are the $k$-th partial derivatives of $f$ evaluated at $\zeta$. The $\gamma$-number measures the local nonlinearity of $f$ near $\zeta$. More precisely, it quantifies the size of the higher derivatives relative to the inverse Jacobian and therefore controls the radius within which Newton’s method converges quadratically. A small $\gamma(f,\zeta)$ indicates that the system behaves almost linearly near $\zeta$, while a large $\gamma(f,\zeta)$ reflects stronger curvature and a smaller Newton convergence region.

The same definition \eqref{eq:gamma} extends to polynomial maps $f=(f_1,\dots,f_n)\in H_{d_1}\times\cdots\times H_{d_n}$ by interpreting $d_z f(\zeta)$ as the Jacobian matrix. For the Newton correction to converge during the process of tracking a solution path $\zeta_t$ of $h_t$, the current iterate must lie within the local convergence region of the next solution point. This convergence radius is controlled by the $\gamma$-number, and is of order $\frac{1}{\gamma(h_t,\zeta_t)}$. Consequently, the step size $\Delta t$ must be chosen inversely proportional to $\gamma(h_t,\zeta_t)$: regions with larger curvature require smaller steps to maintain convergence.

In the rigid homotopy setting \eqref{eq:lifted-path}, the geometry of the input-output correspondence requires a different curvature measure, known as the split $\gamma$-number (defined below). Because rigid homotopies arise from products of unitary actions on the system's individual components, the higher-derivative contributions can be analyzed componentwise. For a polynomial map $f$ and a point $\zeta$, we consider the quantity
\[\left(\gamma(f_1,\zeta)^2+\cdots+\gamma(f_n,\zeta)^2\right)^{\frac{1}{2}}.
\]
Since the curvature is treated componentwise, the sensitivity to perturbations must be measured accordingly. For each component, this sensitivity is captured by the operator $d_z f_i(\zeta)^\dagger \circ d_z f_i(\zeta)$ projecting a vector onto the normal direction of $f_i$ at $\zeta$. To obtain a scalar quantity that measures the overall sensitivity, we combine these operators into a block-structured operator and take its norm:
\begin{equation}\label{eq:condition-number}
\kappa(f,\zeta) :=
\left\|
\begin{bmatrix}
   d_z f_1(\zeta)^\dagger \circ d_z f_1(\zeta)\\
   \vdots\\
   d_z f_n(\zeta)^\dagger \circ d_z f_n(\zeta)
\end{bmatrix}^\dagger
\right\|.
\end{equation}
This condition number becomes large when the normal directions of the hypersurfaces are nearly aligned, reflecting a loss of transversality. The \demph{split $\gamma$-number} is then defined by
\begin{equation}\label{eq:split-gamma}
    \hat{\gamma}(f,\zeta)
:=
\kappa(f,\zeta)
\left(\gamma(f_1,\zeta)^2+\cdots+\gamma(f_n,\zeta)^2\right)^{\frac{1}{2}}.
\end{equation}
It captures both the componentwise curvature and the sensitivity of the solution, thereby governing step size control in rigid homotopy continuation.

\medskip
The operator norm $\left\|\frac{1}{k!}d_zf(\zeta)^{\dagger}d_z^kf(\zeta)\right\|$ in \eqref{eq:gamma} is notoriously difficult to compute. The main difficulty comes from $d_z^kf(\zeta)$ which is a multilinear map. However, for all $k$, we can bound the operator norm with tractable quantities:
\[
\left\|\frac{1}{k!}d_zf(\zeta)^{\dagger}d_z^kf(\zeta)\right\|\leq
\frac{1}{k!}\|d_zf(\zeta)\|^{-1}\left\|d_z^kf(\zeta)\right\|_{\mathrm{Frob}}\]
Hence, for a polynomial $f\in H_d$ and $\zeta\in\PP^n$, we define
\begin{equation}
    \label{eq:Frob-gamma}%\gamma_{\mathrm{Frob}}
    \gammaFrob
    (f,\zeta):=\sup_{k\geq 2}\left(\frac{1}{k!}\|d_zf(\zeta)\|^{-1}\left\|d_z^kf(\zeta)\right\|_{\mathrm{Frob}}\right)^{\frac{1}{k-1}}.
\end{equation}
We compute $\gammaFrob(f,\zeta)$ by using the property: for a homogeneous polynomial $p$ of degree $\ell$,
\begin{equation}\label{eq:Frob-property}
    \|p(\zeta+z)\|_W=\left\|\frac{1}{\ell!} d_{z}^\ell p(\zeta)\right\|_{\mathrm{Frob}}
\end{equation} 
where $\|p(\zeta+z)\|_W$ is the Bombieri-Weyl norm of the homogeneous degree $\ell$ part of the shifted polynomial $p(\zeta+z)$ (see \cite[Lemma 30]{Lairez2020-rigid-one}).

To summarize, each $\gamma(f_i,\zeta)$ in
the split $\gamma$-number \eqref{eq:split-gamma} is upper bounded
via the Bombieri-Weyl norm of a shifted polynomial $z\mapsto \zeta+z$. Our proofs in \Cref{s:waring-section} involve these ideas.

\section{Rigid homotopy algorithm}\label{s:previous-rigid-homotopy-algorithm}

In this section, we provide an overview of the rigid homotopy algorithm in the black-box evaluation model. The primary focus is to outline the \textsc{BoundedBlackBoxNC} routine introduced in \cite[Algorithm 3]{BCL2023-rigid-two}. It proceeds in three main phases, which we describe in turn.

\subsection{Sampling}\label{ss:sampling}
In this step, we sample a 
random unitary $\mathbf{u}\in \UBigGroup$  
and a start solution $\zeta\in \PP^n$ of the system $\mathbf{u}\cdot f$. This step samples a good start pair and involves the geometry of rigid solution varieties. This sampling provides the rigid homotopy parameter path \eqref{eq:lifted-path} where $(\phi(1),\zeta_1)=(\mathbf{u},\zeta)$.

To sample a pair $(\mathbf{u}, \zeta)$ from the rigid input-output correspondence $\rigidCorrespondence{f}$, we adopt the sampling strategy introduced in \cite[Algorithm 1]{Lairez2020-rigid-one}. The procedure consists of three steps:

\begin{enumerate}[nosep]
    \item We generate a random start solution $\zeta$ from the intersection of random linear subspaces $h_1, \dots,h_n$ in $\PP^n$. 
    \item Independently, we sample random points $y_i$ from each variety $V(f_i)$. 
    \item Finally, we determine the unitary transformations $\mathbf{u} = (u_1, \dots, u_n)$ that map each source configuration $(y_i, \mathbb{T}_{y_i}V(f_i))$ to the target configuration $(\zeta, h_i)$.
\end{enumerate}
This construction guarantees that $(\mathbf{u}, \zeta)$ belongs to $\rigidCorrespondence{f}$ and follows the standard distribution required for the complexity analysis. The detailed procedure is outlined in \Cref{alg:rigid_sampling}.

\begin{algorithm}[hbt!]
\DontPrintSemicolon

\caption{Sampling from the rigid input-output correspondence \cite[Alg. 1]{Lairez2020-rigid-one}}
\label{alg:rigid_sampling}

\KwIn{A polynomial map $f=(f_1,\dots,f_n)\in H_{d_1}\times \cdots \times H_{d_n}$ with $\sum_{i=1}^n \codim{V(f_i)}=n$}
\KwOut{A pair $(\mathbf{u},\zeta)\in \rigidCorrespondence{f}$ sampled from the standard distribution}

Sample linear subspaces $h_1,\dots,h_n \subset \PP^n$ uniformly at random\;

Set $\zeta = \bigcap_{i=1}^n h_i$\;

\For{$i=1,\dots,n$}{
    Sample $y_i \in V(f_i)$ uniformly at random\;
    Set $T_i = \mathbb{T}_{y_i}V(f_i)$\;
    Sample $u_i \in \{u\in U(n+1)\mid uy_i=\zeta,\; uT_i=h_i\}$ uniformly at random\;
}

Set $\mathbf{u} = (u_1,\dots,u_n)$\;
\Return{$(\mathbf{u},\zeta)$}
\end{algorithm}
\newcommand{\GammaEstimate}{\Gamma_{\operatorname{estimate}}}
\subsection{$\gammaFrob$-number estimation}
For a homogeneous polynomial $f \in H_d$, the identity \eqref{eq:Frob-property} enables the Frobenius norm of $f$ to be evaluated in linear time in the dense setting of \cite{Lairez2020-rigid-one}. In contrast, for the black-box evaluation model, because the shift $z \mapsto \zeta + z$ expands each monomial of $f$, the number of monomials in the shifted polynomial grows combinatorially with the degree. Since the number of degree $D$ monomials in $n+1$ variables is not polynomially bounded in $n$ and $D$, the evaluation of $\|f(\zeta+z)\|_W$ may not admit a polynomial-time bound. To address this, \cite{BCL2023-rigid-two} introduced the probabilistic estimate of $\gamma_{\mathrm{Frob}}$ to obtain polynomial-time complexity in the black-box evaluation model.

That is, \cite[Algorithm 2]{BCL2023-rigid-two} provides an algorithm to estimate the Frobenius norm with a nonzero probability of failure in at most polynomial-time complexity. See \Cref{alg:gamma-estimate} and \cite[Theorem 3.3]{BCL2023-rigid-two}.

\begin{algorithm}[hbt!]
\DontPrintSemicolon

    \caption{Probabilistic estimation of $\gamma_{\mathrm{Frob}}$  \cite[Alg. 2]{BCL2023-rigid-two}}
    \label{alg:gamma-estimate}
    
    \KwIn{A homogeneous polynomial $f\in H_D$ given as a black-box evaluation program, a point $\zeta\in \mathbb{P}^{n}$, and a constant $\epsilon\in (0,1)$}
    \KwOut{An estimation $\GammaEstimate\in \mathbb{R}$ such that $\gamma_{\mathrm{Frob}}(f,\zeta)\leq \GammaEstimate\leq 192n^2D\gamma_{\mathrm{Frob}}(f,\zeta)$ with probability at least $1-\epsilon$}
    
    Set $h(z) = f(\zeta+z)$ by black-box evaluation program\;
    Set $s = \left\lceil1+\log_2\frac{D}{\epsilon}\right\rceil$\;
        \For{$i =1,\dots,s$}{
        Set $w_i$ a random uniformly distributed element in a unit ball in $\mathbb{C}^{n+1}$\;
        Compute $h_2(w_i),\dots,h_D(w_i)$, where $h_k$ is the degree $k$ homogeneous part of $h$\;
    }
    \Return{$\max\limits_{2\leq k \leq D}\left\{\left(
    \displaystyle
    \frac{(32nk)^k}{s\|d_{z}h(\boldsymbol{0})\|^2}{n+k+1\choose k}\sum\limits_{i=1}^s|h_k(w_i)|^2\right)^{\frac{1}{2k-2}}\right\}$}
\end{algorithm}

\begin{example}[Estimating $\gamma_{\mathrm{Frob}}$ for a conic]\label{ex:estimateing-for-conic}
    We provide an example to illustrate the core concepts of \Cref{alg:gamma-estimate}.     Consider the homogeneous quadratic polynomial
\[
f = x_0^2 - x_1^2 - x_2^2 \in H_2,
\]
with $n=2$, and choose the point $\zeta=[1:1:0]\in V(f)$. Expanding $f$ around $\zeta$, we obtain
\[
h(z) := f(\zeta+z)
%= (1+z_0)^2-(1+z_1)^2-z_2^2
= 2(z_0-z_1)+z_0^2-z_1^2-z_2^2,
\]
and we find the homogeneous components of $h$ are
\[
h_1(z)=2(z_0-z_1), \qquad
h_2(z)=z_0^2-z_1^2-z_2^2.
\]
Therefore, from $h_1(z)$, we find $\|d_zh(\boldsymbol{0})\|^2=8$.

Let $\epsilon=\frac{1}{4}$. Then, \Cref{alg:gamma-estimate} prescribes $s=\left\lceil 1+\log_2\frac{2}{\epsilon}\right\rceil=4$. We randomly sample \begin{align*}
w_1&=(0.4275-0.6339i,-0.3085+0.1682i,-0.4613-0.0327i),\\
w_2&=(-0.1376+0.3953i,-0.6960-0.4021i,-0.2992-0.0088i),\\
w_3&=(-0.1258-0.1158i,0.5414-0.3448i,0.1793-0.4512i),\\
w_4&=(-0.0942-0.7393i,-0.1586-0.3602i,0.2997+0.2197i).
\end{align*} 
in the unit ball of $\mathbb{C}^3$. This sampling is used to approximate the Bombieri-Weyl norm of the homogeneous component $h_2$ via Monte-Carlo estimation.
The estimate is 
\[
\GammaEstimate
=
\left(
\frac{(32\cdot 2\cdot 2)^2}{4\cdot 8}
\binom{5}{2}
\sum_{i=1}^4 |h_2(w_i)|^2
\right)^{\frac{1}{2}}.
\]
This gives $\GammaEstimate\approx 95.4$. With this choice of $\epsilon$, depending on the sampled $w_i$'s, $\Gamma$ satisfies the bound 
\begin{equation}\label{eq:theoretical-guarantee-example}
\gammaFrob
(f,\zeta)\leq \GammaEstimate\leq 192n^2D
\gammaFrob
(f,\zeta).
\end{equation}
with probability at least $1-\epsilon=\frac{3}{4}$. 
Equation \eqref{eq:theoretical-guarantee-example} gives bounds on $\gammaFrob(f,\zeta)$:
\begin{equation}\label{eq:theoretical-guarantee-bounds}
0.0621 \leq
\gammaFrob
(f,\zeta)
\leq  95.4\,.
\end{equation}

We compare the previous bounds with the exact value of $\gammaFrob(f,\zeta)$. Since $f$ is quadratic, only the term $k=2$ contributes in the definition of $\gammaFrob(f,\zeta)$ in~\eqref{eq:Frob-gamma}. 
We first consider $\|d_zf(\zeta)\|^{-1}$. 
Let $\hat{\zeta}=\frac{1}{\sqrt{2}}(1,1,0)$ be a unit representative of $\zeta$. 
Then, we have $d_{z}f(\hat\zeta)=(\sqrt{2},-\sqrt{2},0)$, and for a tangent vector $v=(v_0,v_1,v_2)\in \hat\zeta^\perp$ satisfying $v_0+v_1=0$, one obtains $\left(d_{z}f(\hat\zeta)\right)\cdot v=\sqrt{2}v_0-\sqrt{2}v_1=2\sqrt{2}\,v_0$. The maximum of $2\sqrt{2}v_0$ happens when $v_2=0$, and so we have $\|v\|=2|v_0|^2=1$. Hence, the operator norm is $2$ since $|v_0|=\frac{1}{\sqrt{2}}$, and it yields $\|d_zf(\zeta)|_{\hat{\zeta}^\perp}\|^{-1}=\frac{1}{2}$. On the other hand, the second derivative satisfies $\frac{1}{2}d_{z}^2 f(\hat\zeta)(v,v)=v_0^2 - v_1^2 - v_2^2$. Restricting to $\hat\zeta^\perp$ (which satisfies $v_0+v_1=0$), it becomes $\frac{1}{2}d_{z}^2 f(\hat\zeta)(v,v)=-v_2^2$. Viewing this as a quadratic form, the only nonzero coefficient is that of $z_2^2$, which has magnitude $1$; hence, $\left\|\frac{1}{2!}d^2_zf(\zeta)\right\|_{\mathrm{Frob}}=1$. Therefore, $\gammaFrob(f,\zeta)
=
\frac{1}{2}$. The estimate $\GammaEstimate$ obtained above is consistent with the theoretical guarantee \eqref{eq:theoretical-guarantee-example}.
\end{example}

\subsection{Continuation}

The steps above are assembled into the continuation algorithm to find a root of a homogeneous polynomial system $f=(f_1,\dots, f_n)\in H_{d_1}\times \cdots \times H_{d_n}$. Starting from the sampled pair $(\mathbf{u},\zeta)$ obtained via \Cref{alg:rigid_sampling}, we construct a path $\phi(t)=\exp(tA)$ where $\phi(1)=\exp(A)=\mathbf{u}$ and $\phi(0)$ is the identity of $\UBigGroup$. This induces a homotopy $h_t := \phi(t)\cdot f$, along which the solution is tracked from $t=1$ to $t=0$ using numerical continuation. The step size is determined by a probabilistic estimate of the $\gamma$-number \eqref{eq:gamma} obtained from \Cref{alg:gamma-estimate}. The resulting continuation procedure is described in \Cref{alg:rigid_continuation}.

\begin{algorithm}[hbt!]
\DontPrintSemicolon

    \caption{Rigid homotopy continuation  \cite[Alg. 3]{BCL2023-rigid-two}}
    \label{alg:rigid_continuation}
    
    \KwIn{A homogeneous polynomial system $f=(f_1,\dots, f_n)\in H_{d_1}\times \cdots \times H_{d_n}$, a start system $\mathbf{u} \cdot f$, a start solution $\zeta \in V(\mathbf{u}\cdot f)$ sampled using \Cref{alg:rigid_sampling}, an upper bound $K$ on the number of continuation steps, and $\epsilon\in(0,\frac{1}{2}]$.}
    \KwOut{A point $\zeta^*$ which is an approximate zero of $f$ with probability at least $1-\epsilon$ whenever the algorithm terminates after at most $K$ continuation steps.}   
    Set $t = 1$\;
    Set $z = \zeta$\;
    Set $\phi(t)$ a path with $\phi(1)=\mathbf{u}$ and $\phi(0)$ is the identity of $\UBigGroup$\;
    \While{$t > 0$}{
        Set $h_t = \phi(t)\cdot f$\;
        Set $g_i$ as an estimate returned by \Cref{alg:gamma-estimate} for each polynomial in $h_t$ with failure probability $\frac{\epsilon}{nK}$\;
        Set $\displaystyle\hat{\gamma}_{\operatorname{Frob}}(h_t, z) = \kappa(h_t, z) \sqrt{\sum_{i=1}^n g_i^2 }$\;
        Set $\displaystyle\Delta t = \frac{1}{240  \kappa(h_t, z)\hat{\gamma}_{\text{Frob}}(h_t, z)}$\; 
        Set $t = \max\{t -\Delta t, 0\}$\;
        Set $z$ to the result of Newton iterations for $h_t$ starting from $z$\;    
        }
    
    \Return{$\zeta^*=z$}
\end{algorithm}

\newcommand{\GammaFrob}{\Gamma_{\operatorname{Avg}}}

The role of $K$ is to distribute the failure probability among the finitely many runs of \Cref{alg:gamma-estimate}. If at most $K$ continuation steps are performed, then at most $nK$ probabilistic estimates are made. By assigning failure probability $\frac{\epsilon}{nK}$ to each call, the union bound implies that all estimates are simultaneously correct with probability at least $1-\epsilon$.

The complexity of the continuation procedure is governed by the local conditioning along the solution path. Since the algorithm starts from a randomly sampled root, we average the $\gamma$-numbers over the solution set of each polynomial $f_i$. This leads to the definition of the \demph{averaged $\gamma$-number} of $f_i$:
\begin{equation} \label{eq:Gamma_def}
    \GammaFrob(f_i) := \EE_{\zeta \in V(f_i)} \left[\gammaFrob(f_i, \zeta)^2\right] ^{\frac{1}{2}},
\end{equation}
where the expectation is taken over a uniformly distributed zero $\zeta$ of $f_i$. 

The definition of the averaged $\gamma$-number admits a natural extension to homogeneous polynomial systems
$f=(f_1,\dots,f_n)\in H_{d_1}\times\cdots\times H_{d_n}$.
For such a system, the averaged $\gamma$-number is given by
\begin{equation}\label{eq:averaged-gamma-system}
\GammaFrob(f):=\left(\GammaFrob(f_1)^2+\cdots +\GammaFrob(f_n)^2\right)^{\frac{1}{2}}.    
\end{equation}

\subsection{Efficient evaluation models: algebraic branching programs \& Waring~representations}
An algebraic branching program (ABP) models a polynomial $f$ as the sum of weights of all paths in a layered graph, or equivalently, as an iterated matrix multiplication $f(z) = \text{tr}(A_1(z) \cdots A_D(z))$ where the entries of the matrices $A_i(z)$ are linear forms in variables $z:=(z_0,\dots,z_n)$.
Note that $\GammaFrob(f)$  depends on the polynomial $f\in H_D$. 
For a distribution on the entries of the matrices $A_i(z)$, 
we have the expectation $\EE_{\{A_i(z)\}}[\GammaFrob(f)^2]$. The following theorem establishes an explicit bound on the averaged condition number for this class of polynomials:

\begin{theorem}[\cite{BCL2023-rigid-two}, Theorem 1.5] \label{thm:abp_gamma}
    If $f$ is a random polynomial computed by an irreducible Gaussian random ABP of degree $D$, then 
    \begin{equation}\label{eq:expectation-ABP}
        \EE_{\{A_i(z)\}}[\GammaFrob(f)^2] \leq \frac{3}{4} D^3 (D+n) \log D.
    \end{equation}
\end{theorem}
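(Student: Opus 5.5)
The plan is to reduce the averaged $\gamma$-number to second moments of the Bombieri--Weyl norms of the homogeneous parts of the shifted polynomial $h(z)=f(\zeta+z)$, and then to compute those moments for the Gaussian ABP model.

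\textbf{Step 1: replace the supremum by a sum.} Since $\gammaFrob(f,\zeta)^2=\sup_{k\ge 2}(\cdots)^{2/(k-1)}$, we bound it by
\[
\sum_{k=2}^{D}\left(\frac{\|h_k\|_W^2}{\|d_zf(\zeta)\|^2}\right)^{\frac{1}{k-1}},
\]
where $h_k$ is the degree-$k$ part of $h$. This uses \eqref{eq:Frob-property} to write $\frac{1}{k!}\|d^k_zf(\zeta)\|_{\mathrm{Frob}}=\|h_k\|_W$.

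\textbf{Step 2: fix the point by invariance and the coarea formula.} The Gaussian ABP distribution is invariant under the action of $U(n+1)$ on the variables, since each linear form in each $A_i(z)$ has a unitarily invariant Gaussian law. So the joint law of $(f,\zeta)$, with $\zeta$ uniform on $V(f)$, can be computed with $\zeta=e_0$ fixed. By the coarea formula on the incidence variety $\{(f,\zeta): f(\zeta)=0\}$, this gives
\[
\EE_f\,\EE_{\zeta\in V(f)}[G(f,\zeta)]
=\frac{\EE\big[\|d_zf(e_0)\|^2\,G(f,e_0)\mid f(e_0)=0\big]}{\EE\big[\|d_zf(e_0)\|^2\mid f(e_0)=0\big]}.
\]
The normal Jacobian of the hypersurface contributes the weight $\|d_zf(e_0)\|^2$. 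This weight is the useful feature: for $k=2$ it cancels the denominator exactly.

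\textbf{Step 3: handle $k\ge 3$ by H\"older.} For $k\ge 3$ the weighted term is $\|d_zf\|^{2-2/(k-1)}\|h_k\|_W^{2/(k-1)}$. H\"older's (or Jensen's) inequality with exponents $\frac{k-1}{k-2}$ and $k-1$ bounds its expectation by
\[
\EE\big[\|d_zf\|^2\big]^{\frac{k-2}{k-1}}\;\EE\big[\|h_k\|_W^2\big]^{\frac{1}{k-1}}.
\]
After dividing by the normalizer $\EE\|d_zf\|^2$, each term becomes $\big(\EE\|h_k\|_W^2/\EE\|d_zf(e_0)\|^2\big)^{1/(k-1)}$, with all expectations conditioned on $f(e_0)=0$.

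\textbf{Step 4: compute the Gaussian moments.} Write $A_i(e_0+z)=B_i+A_i(z)$ with $B_i=A_i(e_0)$. Then
\[
h_k=\sum_{|S|=k}\operatorname{tr}\Big(\prod_i C_i^S\Big),
\]
where $C_i^S=A_i(z)$ for $i\in S$ and $C_i^S=B_i$ otherwise. Because the entries are independent Gaussians, cross terms vanish in $\EE\|h_k\|_W^2$, which reduces to $\binom{D}{k}$ times a product of matrix-size factors and Bombieri--Weyl norms of products of independent linear forms. Similarly, $\EE\|d_zf(e_0)\|^2$ equals $D$ times a comparable factor. Their ratio is therefore of order $\binom{D}{k}$ times a mild polynomial in $n$. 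Taking the $(k-1)$-th root and summing over $k$, the $k=2$ term is $O(D^2(D+n))$-type and the higher terms decay. Careful bookkeeping should yield $\frac34 D^3(D+n)$, with the $\log D$ coming from a harmonic-type sum $\sum_k 1/k$ that arises when bounding $\binom{D}{k}^{1/(k-1)}$.

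\textbf{Main obstacle.} I expect the hardest step to be the conditioning on $f(e_0)=0$. The constraint $\operatorname{tr}(B_1\cdots B_D)=0$ is nonlinear in the Gaussian data, so it is not a simple Gaussian conditioning. My first attempt would be to condition on everything except one entry of $B_1$ in which $f(e_0)$ is affine. That makes the conditioning linear and leaves the other factors Gaussian, with the irreducibility hypothesis guaranteeing the relevant coefficient is almost surely nonzero. The remaining risk is controlling the resulting non-Gaussian factor's moments by their unconditioned values.
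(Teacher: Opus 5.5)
The paper does not prove this statement; it imports it from \cite{BCL2023-rigid-two}. Your Steps 1--3 are sound and coincide with the reduction the paper carries out in \Cref{s:waring-section}. By the coarea formula, normalized with $G\equiv 1$, your ratio $\EE[\|h_k\|_W^2\mid f(e_0)=0]/\EE[\|d_zf(e_0)\|^2\mid f(e_0)=0]$ is exactly $\frac{\pi}{Dn}\int_{W_\zeta}\dPrime L\,\|\partial_Lf_L(\zeta)\|^{-2}\|h_k\|_W^2$, the quantity in \eqref{eq:ME-single-integral}. Your H\"older step is Jensen's inequality for the joint law of $(f,\zeta)$.

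The gap is Step 4, which carries the entire content of the theorem. There you compute both moments as if the $B_i=A_i(e_0)$ were still independent Gaussians. But they are conditioned on $F(B):=\operatorname{tr}(B_1\cdots B_D)=0$, and under this conditioning $B$ has density proportional to $e^{-\|B\|^2}\|\partial_BF\|^{-2}$ on the cone $\{F=0\}$. This law is neither Gaussian nor a product law, and its moments are not controlled by the unconditional ones.

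The width-one ABP $f=\ell_1\cdots\ell_D$, which the irreducibility hypothesis must exclude, makes this concrete. For $\zeta\in V(\ell_1)$ the $k=2$ term gives $\gammaFrob(f,\zeta)^2\ge\frac12\|\sum_{j\ge2}\ell_j/\ell_j(\zeta)\|^2$. Since $\ell_2(\zeta)$ is a standard complex Gaussian independent of the component of $\ell_2$ orthogonal to $\zeta$, this yields $\EE[\GammaFrob(f)^2]=+\infty$. The unconditional moments your Step 4 uses are nevertheless finite. So irreducibility must act as an integrability condition inside the main estimate, not merely keep a coefficient nonzero. The constants $\frac34$ and $\log D$ in your sketch are guessed rather than derived.

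What is missing is the ABP analogue of \Cref{ss:evaluating-inner-Wr,ss:evaluating-outer-W1}. Split $A_i(z)=z_0B_i+C_i(z_1,\dots,z_n)$. The constraint involves only $B$, so $C$ stays standard Gaussian and independent after conditioning, and you can integrate it out exactly.
\begin{itemize}
\item One gets $\EE_C\|d_zf(e_0)\|^2=n\|\partial_BF\|^2$.
\item By \cite[Lemma 3.1]{BCL2023-rigid-two}, $\EE_C\|h_k\|_W^2$ is an explicit combination, over subsets $S\subseteq\{1,\dots,D\}$ with $1\le|S|\le k$, of products of squared Frobenius norms of the partial products of consecutive $B_i$ lying between elements of $S$. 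This is where your ``cross terms vanish'' is correct.
\end{itemize}
Your ratio then equals $\frac1n$ times the Gaussian average of $\EE_C\|h_k\|_W^2/\|\partial_BF\|^2$ over the cone $\{F=0\}$ with its induced volume. This integrand is a sum of rational functions of $B$, homogeneous of degrees $2-2m$ for $1\le m\le k$, and singular along $\{\partial_BF=0\}$. Bounding this average is the actual proof.

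Your block idea does work if you condition on all of $B_2,\dots,B_D$ rather than on all but one entry. Then $\{F=0\}$ is a hyperplane in $B_1$, $B_1$ is Gaussian on it, and $B_2,\dots,B_D$ become unconstrained Gaussians. The only weight left is $\|\partial_{B_1}F\|^{-2}$, which depends on $B_2,\dots,B_D$ alone. You then face expectations such as $\EE[\|X\|^2\|Y\|^2/\|XGY\|^2]$ with $G$ a Gaussian matrix independent of $X,Y$; for $k=2$, for instance, $X=B_2\cdots B_{j-1}$, $G=B_j$, $Y=B_{j+1}\cdots B_D$. These are finite only when the relevant partial products have rank at least two. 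That is exactly what fails for width one, and it is where irreducibility has to be used.

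None of this is carried out, so the proposal establishes the reduction but not the bound.
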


The theorem shows that for polynomials computed by ABPs, the expectation 
in~\eqref{eq:expectation-ABP}
is polynomially bounded in $n$ and $D$.
Hence, it confirms that the rigid homotopy continuation applied to random systems consisting of Gaussian random ABPs of degree at most $D$ terminates after polynomial many arithmetic operations \cite[Corollary 1.6]{BCL2023-rigid-two}. 

\medskip
On the other hand, we focus on Waring-type representations as another natural model of low evaluation complexity. A homogeneous polynomial $f \in \mathbb{C}[z_0,\dots,z_n]$ of degree $D$ is said to admit a \demph{Waring representation} of length $r$ if it can be written in the form 
\[
f(z):=L_1(z)^D+\cdots + L_r(z)^D
\]
where $L_1,\dots, L_r$ are linear forms in $z=(z_0,\dots, z_n)$. Although the expanded form of $f$ is typically dense and contains ${n+D\choose D}$ monomials, it can be evaluated by computing $r$ linear forms and raising each to the $D$-th power. In particular, the evaluation complexity satisfies $e(f)=O(r(n+D))$, which can be significantly smaller than the dense coefficient size.
Thus, Waring-type polynomials form a canonical family of evaluation-efficient structured inputs within the black-box model of rigid continuation.

For polynomial systems with a fixed Waring representation length, we prove \Cref{thm:waring-condition-bound}, an analog of \Cref{thm:abp_gamma}. Polynomial systems admitting Waring representations form a natural class of unitary-invariant structured inputs with low evaluation complexity, making them suited to the black-box complexity framework for rigid homotopies. Even if a polynomial has a high degree, when it has a Waring representation, its geometric complexity is governed by relatively less information about its rank-one summands. It means that their average complexity may be favorable. Specifically, we prove that for random polynomials with a prescribed Waring representation length, the expected condition number is polynomially bounded asymptotically, especially when the Waring representation length $r$ is sufficiently larger than the degree $D$ of the polynomial.

Although Waring representation is a seemingly simpler structural model than ABPs, their conditioning behavior is more delicate. The stronger correlations among coefficients may introduce dominance of a few summands in the high-degree power structure that does not arise in the ABP setting. As a result, achieving a polynomial complexity bound is not automatic. 
The analysis does not follow 
a straightforward adaptation of the argument for ABPs. Instead, it requires a separate treatment as illustrated in \Cref{ss:evaluating-outer-W1,ss:consequences}.

\newcommand{\vol}{\operatorname{vol}}
\newcommand{\Frob}{\operatorname{Frob}}
\newcommand{\trace}{\operatorname{tr}}
\newcommand{\diag}{\operatorname{diag}}

\section{A condition-based complexity result for a Waring structured system}\label{s:waring-section}

\newcommand{\normSquaredKthDerivative}{\left\|\frac{1}{k!}d_z^kf_L(\zeta)\right\|^2_{\Frob}}

\newcommand{\mainExpectation}{\mathbb{E}_{L,\zeta\in V(f_L)}\left[\|d_z f_L(\zeta)\|^{-2}\normSquaredKthDerivative\right]}

We devote this section to proving \Cref{thm:waring-condition-bound}, the explicit bound on the averaged condition number for a polynomial in Waring representation. In \Cref{ss:fix-notation}, we introduce the Waring structured polynomial model and fix the notation. In \Cref{ss:formulate-main-expectation}, we begin the analysis of the $\gamma$-number \eqref{eq:Gamma_def}. Using the Waring structure, we first reduce the problem to the single-integral expression \eqref{eq:ME-single-integral}, and then, by exploiting unitary invariance, reformulate it as an iterated integral in \Cref{ss:double-integral}. The inner and outer integrals are evaluated in \Cref{ss:evaluating-inner-Wr,ss:evaluating-outer-W1}, which leads to the conclusion in \Cref{ss:consequences}.

\subsection{The Waring structured polynomial model}\label{ss:fix-notation}

Fix a positive integer $r$. For any linear map $L=(L_1,\dots,L_r):\CC^{n+1}\to \CC^r$ with each linear form $L_i$, we associate the (degree $D$) homogeneous polynomial
\[f_L(z):=
\sum_{i=1}^r L_i(z)^D\in H_D.
\]
Each linear form admits the representation
\[L_i(z) = \langle \lambda_i, z \rangle 
       = \lambda_{i,0} z_0 + \cdots + \lambda_{i,n} z_n\]
where $\lambda_i \in \mathbb{C}^{n+1}$. Thus, using the usual Hermitian norm on the dual space of $\CC^{n+1}$ we have 
$\Vert L_i\Vert^2:=\sum\limits_{j=0}^n |\lambda_{i,j}|^2$.

The parameter space of Waring representations of length $r$ is
\[
W_r := \{ L = (L_1,\dots,L_r) 
    \mid L_i \in (\mathbb{C}^{n+1})^\ast \}
    \cong \mathbb{C}^{r\times (n+1)}.
\]
So, \(\dim_{\mathbb{C}} W_r = r(n+1)\), and we have $\Vert L\Vert^2 := \sum\limits_{i=1}^r\| L_i\Vert^2$.

On this space we consider the standard Gaussian probability measure, whose density with respect to Lebesgue measure $dL$ on $\mathbb{C}^{r\times(n+1)}$ is \begin{equation}\label{eq:density-L} \dPrime L := \pi^{-(n+1)r} \exp(-\Vert L\Vert^2)\, dL. \end{equation} Here, $\dPrime L$ is simply a convenient notation for this Gaussian measure.

\begin{notation}
As is common in the rigid homotopy literature, 
we place the measure before the integrand to emphasize the domain where the integration is taken.
For instance, 
$\intMeasure{X}{dx}\left(K\right)$, as well as
$\intMeasure{Y}{dy}\left(
\intMeasure{X}{dx}
K\right)$
 for an iterated integral.
\end{notation}

\begin{remark}
A related invariant of a polynomial is the Waring rank. This is the minimal $r$ for which there exists a Waring representation of length $r$. This quantity is NP hard to compute and not the objective of our interest.
\end{remark}

\subsection{Formulation of the main expectation}\label{ss:formulate-main-expectation}

We study the averaged $\gamma$-number \eqref{eq:Gamma_def} along the zero set of a random polynomial $f_L$ induced by a Gaussian linear map $L$. Our goal is to analyze its expectation over both the randomness of $L$ and the choice of a root $\zeta \in V(f_L)$, namely 
\[
\EE_L[\GammaFrob(f_L)^2]=\EE_{L,\zeta \in V(f_L)}[\gammaFrob(f_L,\zeta)^2].
\]

To bound $\EE_{L,\zeta \in V(f_L)}[\gammaFrob(f_L,\zeta)^2]$, we analyze each term of 
\[\gammaFrob
    (f,\zeta)=\sup_{k\geq 2}\left(\frac{1}{k!}\|d_zf(\zeta)\|^{-1}\left\|d_z^kf(\zeta)\right\|_{\mathrm{Frob}}\right)^{\frac{1}{k-1}}
    \]
separately. For Gaussian random $L\in W_r$ and a root $\zeta$ uniformly distributed over $V(f_L)\subset \PP^n$, we consider 
\begin{equation}\label{eq:main-expectation}
    \mainExpectation
\end{equation}
for each $k\in\{2,\dots, D\}$. We eventually bound  $\EE_{L,\zeta \in V(f_L)}[\gammaFrob(f_L,\zeta)^2]$ using the bound of the expectation \eqref{eq:main-expectation} for each $k$ in the proof of \Cref{thm:waring-condition-bound}. The factor $\normSquaredKthDerivative$ will be analyzed in
\Cref{ss:evaluating-inner-Wr}. 
We focus here on the term
$\|d_z f_L(\zeta)\|^{-2}$
where we have previously defined the linear map
\[d_zf_L(\zeta):\mathbb{C}^{n+1}\to\mathbb{C},
\qquad v\mapsto d_zf_L(\zeta)(v)=\left.\frac{d}{dt}\right|_{t=0} f_L(\zeta + t v).
\]

The expectation \eqref{eq:main-expectation} as integral is written as  
\[
\intMeasure{W_r}{\dPrime L}
\left(
    \,
    \frac{1}{\vol(V(f_L))}
    \,
    \intMeasure{V(f_L)}{d\zeta\,}
    \|d_z f_L(\zeta)\|^{-2}
    \left\|\frac{1}{k!}d_z^k f_L(\zeta)\right\|_{\Frob}^2
\right). 
\]
Using the fact that $\vol(V(f_L))=D \vol(\mathbb{P}^{n-1})$ for a dense open set of $L\in W_r$ 
we have
\[
\frac{1}{D\vol(\mathbb{P}^{n-1})}
\intMeasure{W_r}{\dPrime L}
\left(
    \intMeasure{V(f_L)}{d\zeta\,}
    \|d_z f_L(\zeta)\|^{-2}
    \left\|\frac{1}{k!}d_z^k f_L(\zeta)\right\|_{\Frob}^2
\right).
\]
\cite[Proposition 5.6]{BCL2023-rigid-two} allows us to reverse the order of integration 
to get
\begin{equation}
\frac{1}{D\vol(\mathbb{P}^{n-1})}
\intMeasure{\PP^n}{
d\zeta}\left(\int_{W_\zeta}
\dPrime L
\|\partial_L f_L(\zeta)\|^{-2}
\normSquaredKthDerivative
\right),\label{eq:ME-double-integral}
\end{equation}
where $W_\zeta:=\{L\mid f_L(\zeta)=0\}$ 
and $\partial_L f_L(\zeta)$ denotes the directional derivative
\begin{equation}\label{eq:define-partial-L}
\partial_L f_L(\zeta)(\dot L):=
\left.\frac{d}{dt}\right|_{t=0}
f_{L+t\dot L}(\zeta).
\end{equation}

\newcommand{\magicConstant}{K}
Recall that the root $\zeta$ is a zero of the polynomial $f_L$ for a fixed $L$. Although both $L$ and $\zeta$ are random variables, $\zeta$ is sampled conditionally on $L$ from the zero set $V(f_L)$. For $U\in U(n+1)$, let $U\cdot L$ denote the linear forms $(U\cdot L)_i(z)=L_i(U^{-1}z)$. Then, the corresponding root must transform as $\zeta \mapsto U\zeta$ in order to preserve the relation $f_L(\zeta)=0$. Indeed, we have $f_L(\zeta)=f_{U\cdot L}(U\zeta)$.
Since $\dPrime L$ is unitarily invariant,
we know that the integrand
$$\intMeasure{W_\zeta}{{\dPrime}L}
    \left(
    \|\partial_L f_L(\zeta)\|^{-2}\normSquaredKthDerivative
    \right)$$
in \eqref{eq:ME-double-integral}
is independent of the choice of $\zeta$,  and therefore constant in $\zeta$. 
Using the standard fact that 
$$\intMeasure{\PP^n}{d\zeta}
\left(\magicConstant
\right)= \frac{\pi}{n}\vol(\mathbb{P}^{n-1})\magicConstant$$
holds
for any constant $\magicConstant$, 
we simplify \eqref{eq:ME-double-integral} to get
\begin{multline}\label{eq:ME-single-integral}
\mainExpectation
\\=\frac{\pi}{Dn}
\intMeasure{W_\zeta}{{\dPrime}L}
    \left(
    \|\partial_Lf_L(\zeta)\|^{-2}\normSquaredKthDerivative\right).
\end{multline}

\subsection{Formulation to 
iterated integral}\label{ss:double-integral}

To evaluate the integral \eqref{eq:ME-single-integral}, we begin with analyzing the term $\|\partial_L f_L(\zeta)\|$ where the notation $\partial_L f_L(\zeta)$
is introduced in \eqref{eq:define-partial-L}.

By unitary invariance, we may assume that $\zeta = [1:0:\cdots:0]$.
There are several benefits from this assumption. First, the parameter space $W_r$ admits a natural orthogonal decomposition
\[
W_r\cong \mathbb{C}^r\oplus \CC^{r\times n}
\]
where $\CC^r$ consists of the coefficients of $z_0$ and $\CC^{r\times n}$ consists of the coefficients of $z_1,\dots,z_n$ in the linear forms $L_1,\dots,L_r$. Put simply, if \newline
$\begin{bmatrix}
    \lambda_{1,0}&\lambda_{1,1}&\dots&\lambda_{1,n}\\
    \vdots&\vdots&\ddots&\vdots\\
\lambda_{r,0}&\lambda_{r,1}&\dots&\lambda_{r,n}\\    
\end{bmatrix}
\in W_r
$,
then we set 
$
\lambdazeros:=\begin{bmatrix}
    \lambda_{1,0}\\
\vdots\\
\lambda_{r,0}    
\end{bmatrix}\in \CC^r$
and
$C:=\begin{bmatrix}
    \lambda_{1,1}&\dots&\lambda_{1,n}\\
    \vdots&\ddots &\vdots\\
\lambda_{r,1}&\dots&\lambda_{r,n}\\    
\end{bmatrix}\in \CC^{r\times n}.$
Therefore, $L(z)=z_0\lambdazeros +C\begin{bsmallmatrix}
    z_1\\ \vdots \\ z_n
\end{bsmallmatrix}$.

Secondly, the same decomposition applies to tangent vectors. Since $T_L W_r \cong W_r$, any tangent vector $\dot{L} \in T_L W_r$ can be written as $\dot L= (\dot B, \dot C)$ where $\dot B=\begin{bmatrix}
    \dot\lambda_{1,0}&\cdots &\dot\lambda_{r,0}
\end{bmatrix}^\top$ and $\dot C\in \mathbb{C}^{r\times n}$.

Lastly, under the assumption $\zeta=[1:0:\cdots:0]$, $W_\zeta$ admits the decomposition 
\begin{equation}\label{eq:W-zeta-decomposition}
W_\zeta\cong V(\lambda_{1,0}^D+\cdots + \lambda_{r,0}^D)\oplus \CC^{r\times n}.
\end{equation}

\newcommand{\iWaring}{i}

Using the decomposition \eqref{eq:W-zeta-decomposition}, we 
simplify \eqref{eq:ME-single-integral} to obtain
 a new iterated integral:
\begin{equation}\label{eq:a-new-double-integral}
\frac{\pi}{Dn}
\intMeasure{V(\lambda_{1,0}^D+\cdots + \lambda_{r,0}^D)}{\dPrime B}
\left(
    \intMeasure{\CC^{r\times n}}{{\dPrime}C}
    \|\partial_L f_L(\zeta)\|^{-2}\normSquaredKthDerivative
\right).
\end{equation}
Then, for $\zeta=[1:0:\dots:0]$ we have  
$\partial_Lf_L(\zeta)(\dot{L})=
\left.\frac{d}{dt}\right|_{t=0}
f_{L+t\dot L}(\zeta)=
D\sum\limits_{\iWaring=1}^r\lambda_{\iWaring,0}^{D-1}\dot{\lambda}_{\iWaring,0}$, and taking the squared norm gives $\|\partial_L f_L(\zeta)\|^2=D^2\sum\limits_{\iWaring=1}^r|\lambda_{\iWaring,0}|^{2(D-1)}$. Since $\|\partial_L f_L(\zeta)\|^2$ does not depend on $C$, we find 
\eqref{eq:a-new-double-integral} equals
 \begin{equation}
     \frac{\pi}{Dn}
     \intMeasure{V(\lambda_{1,0}^D+\cdots + \lambda_{r,0}^D)}{{\dPrime}B}
     \left(
     \frac{1}
     {D^2\sum\limits_{\iWaring=1}^r|\lambda_{\iWaring,0}|^{2(D-1)}}
     \intMeasure{\CC^{r\times n}}{{\dPrime}C}\normSquaredKthDerivative
     \right).\label{eq:ME-up-to-factor-double-integral}
  \end{equation}

It remains to evaluate the 
iterated integral in \eqref{eq:ME-up-to-factor-double-integral}. We evaluate the inner integral in \Cref{ss:evaluating-inner-Wr} and the outer integral in
\Cref{ss:evaluating-outer-W1}.

\subsection{Evaluating the inner integral}\label{ss:evaluating-inner-Wr}
To evaluate the inner integral in \eqref{eq:ME-up-to-factor-double-integral}, we turn to understanding
$\normSquaredKthDerivative$. 

Recalling that $\zeta=[1:0:\cdots :0]$, define the (non-homogeneous) polynomial
\[
g(z):=f_L(\zeta+z),\]
and let $g_k$ be its $k$-th homogeneous part. Then, by the property of the Frobenius norm \eqref{eq:Frob-property}, we have 
\[
\normSquaredKthDerivative=
\|g_k\|^2_W.\]
Recall that $\|\cdot\|_W$ is the Bombieri-Weyl norm, so we have additional simplifications.

\newcommand{\DD}{D}
Note that we have
\begin{align*}
    g(z_0,\dots, z_n)
    &=\sum\limits_{m=0}^D{D\choose m}(1+z_0)^{D-m}h^{(m)}(z_1,\dots, z_n)
\end{align*}
where
\[
h^{(m)}(z_1,\dots, z_n):= 
\sum_{\iWaring=1}^r \lambda_{\iWaring,0}^{D-m} (\lambda_{\iWaring,1}z_1+\cdots +\lambda_{\iWaring,n}z_n)^m.
\]
Since $h^{(m)}$ 
is homogeneous and of degree $m$ in $z_1,\dots, z_n$, we have
\begin{equation}\label{eq:g_k_sum}
    g_k(z_0,\dots, z_n)=\sum_{m=1}^k{D-m \choose k-m}{D\choose m}z_0^{k-m} h^{(m)}(z_1,\dots, z_n).
\end{equation}
From the definition of the Bombieri-Weyl norm, note that ${k \choose m}\|z_0^{k-m}p\|^2_W=\|p\|^2_W$ for any homogeneous polynomial $p(z_1,\dots, z_n)$ of degree $m\leq k$. Furthermore, the inner product of two different monomials is equal to $0$. It means that all terms in \eqref{eq:g_k_sum} are orthogonal.
Hence,
\[\|g_k\|_W^2=\sum_{m=1}^k{D-m\choose k-m}^2{D\choose m}^2  {k\choose m}^{-1}\|h^{(m)}\|^2_W.\]
Recalling the inner integral of \eqref{eq:ME-up-to-factor-double-integral}, we have
\begin{equation}\label{eq:dprimeC-sum-norm-squared-h-m-W}
\int {\dPrime}C\,\|g_k\|_W^2=\sum_{m=1}^k{D-m \choose k-m}^2{D\choose m}^2
{k\choose m}^{-1}
\int {\dPrime}C\,\|h^{(m)}\|_W^2.
\end{equation}
By \cite[Lemma 3.1]{BCL2023-rigid-two}, we have 
\[\int {\dPrime}C \|h^{(m)}\|^2_W={m+n-1\choose m}\frac{1}{\vol(\mathbb{S}(\mathbb{C}^n))}\int {\dPrime}C|h^{(m)}(z)|^2.\]

With these new expressions, we now turn to evaluating the integral. Recall that

\[h^{(m)}(z_1,\dots, z_n)=\sum_{\iWaring=1}^r \lambda_{\iWaring,0}^{D-m}(\lambda_{\iWaring,1}z_1+\cdots +\lambda_{\iWaring,n}z_n)^m\] 
where $\lambda_{\iWaring,1},\dots, \lambda_{\iWaring,n}$ are standard Gaussian and $(z_1,\dots, z_n)\in \mathbb{S}(\mathbb{C}^n)$ is a unit vector. Hence, $Y_{\iWaring}=\lambda_{\iWaring,1}z_1+\cdots +\lambda_{\iWaring,n}z_n$ is a scalar random variable in $N_\mathbb{C}(0,1)$. Furthermore, for each $\iWaring$, coefficients $\lambda_{\iWaring,1},\dots, \lambda_{\iWaring,n}$ are independent, and so each $Y_j$ is independent. Therefore,
\begin{align*}
    \int {\dPrime}C |h^{(m)}(z)|^2 
    \;=\;
    \EE\left[\left|\sum\limits_{\iWaring=1}^r\lambda_{\iWaring,0}^{D-m}Y_\iWaring^m\right|^2\right]\;=\; \sum\limits_{\iWaring=1}^r\left|\lambda_{\iWaring,0}^{D-m}\right|^2 \EE\left[\left|Y_{\iWaring}^m\right|^2\right]\;=\;m!\sum\limits_{\iWaring=1}^r|\lambda_{\iWaring,0}|^{2(D-m)}.
\end{align*}
The last equality follows since $\EE\left[|Y_\iWaring|^{2m}\right]=m!$.
It means that 
$$\int {\dPrime}C\|h^{(m)}\|_W^2=\frac{(m+n-1)!}{(n-1)!}\sum\limits_{\iWaring=1}^r|\lambda_{\iWaring,0}|^{2(D-m)}.$$
Combining this 
with \eqref{eq:ME-up-to-factor-double-integral} and \eqref{eq:dprimeC-sum-norm-squared-h-m-W}, 
we have

\newcommand{\quotientlambdazero}{\frac{\sum_{i=1}^r|\lambda_{i,0}|^{2(D-m)}}{\sum_{i=1}^r|\lambda_{i,0}|^{2(D-1)}}}

\begin{multline}\label{eq:W1-integral}
    \int_{W_\zeta}{\dPrime}L\|\partial_Lf_L(\zeta)\|^{-2}\left\|\frac{1}{k!}d^k_z f_L(\zeta)\right\|_{\Frob}^2\\
    =\intMeasure{V(\lambda_{1,0}^D+\cdots + \lambda_{r,0}^D)}{{\dPrime}B}
    \left(
    \frac{1}{D^2\sum\limits_{\iWaring=1}^r|\lambda_{\iWaring,0}|^{2(D-1)}}\int_{\CC^{r\times n}}{\dPrime}C\normSquaredKthDerivative
    \right)\\
    =\frac{1}{D^2}{D\choose k}^2\sum\limits_{m=1}^k{k \choose m}\frac{(m+n-1)!}{(n-1)!}\int_{V(\lambda_{1,0}^D+\cdots + \lambda_{r,0}^D)}{\dPrime}\lambdazeros\quotientlambdazero.
\end{multline}
Note that the last equality follows from 
\[{D-m\choose k-m}^2{D\choose m}^2{k\choose m}^{-1}\;=\;{D\choose k}^2{k\choose m}^2{k\choose m}^{-1}\;=\;{D\choose k}^2{k\choose m}.\]

\subsection{Evaluating the outer integral}\label{ss:evaluating-outer-W1}
Recall that we let $\lambdazeros=\begin{bmatrix}
    \lambda_{1,0}&\cdots &\lambda_{r,0}
\end{bmatrix}^\top\in \CC^r$. For each positive integer $m\leq D$, we now evaluate the integral 
\begin{equation}\label{eq:Fermat-integral}
\intMeasure{V(\lambda_{1,0}^D+\cdots + \lambda_{r,0}^D)}{\dPrime \lambdazeros}
\quotientlambdazero
\end{equation}
from \eqref{eq:W1-integral}. For notational convenience,
we define the integrand
\newcommand{\puRational}{q}
\[\puRational(\lambdazeros):=\quotientlambdazero\]
which is homogeneous of degree $2(D-m)-2(D-1)=2-2m$ in  $\lambdazeros=\begin{bmatrix}
    \lambda_{1,0}&\cdots &\lambda_{r,0}
\end{bmatrix}^\top$.

We let $d\lambdazeros$ denote the induced Riemannian volume measure on $V(\lambda_{1,0}^D+\cdots + \lambda_{r,0}^D)$. The measure $d'\lambdazeros$ is the Gaussian probability measure on $V(\lambda_{1,0}^D+\cdots + \lambda_{r,0}^D)$ obtained by normalizing the density $\exp(-\|\lambdazeros\|^2)$ with respect to $d\lambdazeros$, namely
\[d'\lambdazeros:=\frac{\exp(-\|\lambdazeros\|^2) d\lambdazeros}{\int_{V(\lambda_{1,0}^D+\cdots + \lambda_{r,0}^D)} \exp(-\|\lambdazeros\|^2) d\lambdazeros}.
\]

We separate the integral~\eqref{eq:Fermat-integral} into a product of radial and spherical components using polar coordinates. Writing $\lambdazeros=\rho\theta$ with $\rho=\|\lambdazeros\|\in[0,\infty)$ and $\theta\in V(\lambda_{1,0}^D+\cdots + \lambda_{r,0}^D)\cap\mathbb{S}^{2r-1}$, we obtain the following decomposition
\begin{equation}
\int_{V(\lambda_{1,0}^D+\cdots + \lambda_{r,0}^D)} d'{\lambdazeros}q({\lambdazeros})=
\frac{
\int_0^\infty
\rho^{2r-3} \exp(-\rho^2) \rho^{2-2m}  d\rho
}{
\int_0^\infty
\rho^{2r-3} \exp(-\rho^2) d\rho
}
\cdot
\frac{
\int_{V(\lambda_{1,0}^D+\cdots + \lambda_{r,0}^D)\cap\mathbb{S}^{2r-1}} q(\theta) d\sigma(\theta)
}{
\int_{V(\lambda_{1,0}^D+\cdots + \lambda_{r,0}^D)\cap\mathbb{S}^{2r-1}} d\sigma(\theta)
}
\end{equation}
where $d\sigma(\theta)$ denotes the induced surface measure on $V(\lambda_{1,0}^D+\cdots + \lambda_{r,0}^D)\cap\mathbb{S}^{2r-1}$.

The radial part simplifies to
\[
\frac{\int_0^\infty \rho^{2r-2m-1}\exp(-\rho^2)d\rho}{\int_0^\infty \rho^{2r-3}\exp(-\rho^2)d\rho},
\]
while the spherical part can be expressed as an expectation with respect to the normalized surface measure on $V(\lambda_{1,0}^D+\cdots + \lambda_{r,0}^D)\cap\mathbb{S}^{2r-1}$. Consequently,
\begin{equation}
\intMeasure{V(\lambda_{1,0}^D+\cdots + \lambda_{r,0}^D)} {d'{\lambdazeros}}q({\lambdazeros})
=
\frac{\int_0^\infty \rho^{2r-2m-1}\exp(-\rho^2)d\rho}{\int_0^\infty \rho^{2r-3}\exp(-\rho^2)d\rho}
\cdot \EE_\theta[\puRational(\theta)].
\label{eq:radial-spherical-factors}
\end{equation}

To integrate the radial factor, we set $t=\rho^2$, then
\[\int_0^\infty \rho^{2r-2m-1}\exp({-\rho^2})d\rho
\;=\;
\frac{1}{2}\int_0^\infty t^{(r-m)-1}e^{-t}dt
\;=\;
\frac{1}{2}G(r-m)\]
where $G$ is the Gamma function. Here, we note that $r>m$ for the convergence of the integral.
Likewise, the denominator in 
\eqref{eq:radial-spherical-factors} is a value of the Gamma function:
\[\frac{1}{2}G(r-1)= 
\int_0^\infty \rho^{2r-3}\exp(-\rho^2)d\rho.\]
Therefore, the radial factor is
\begin{equation}\label{eq:radial-part} \frac{\int_0^\infty \rho^{2r-2m-1}\exp(-\rho^2)d\rho}{\int_0^\infty \rho^{2r-3}\exp(-\rho^2)d\rho}
\;=\;
\frac{G(r-m)}{G(r-1)}
\;=\;
\left\{\begin{array}{ll}
    1 & \text{ if }m=1 \\
    \prod\limits_{j=2}^m\frac{1}{r-j} & \text{ if }m\geq 2 
\end{array}\right.
\end{equation}

To bound the spherical factor in \eqref{eq:radial-spherical-factors}, we consider the following inequalities
\[\EE_\theta [\puRational(\theta)]
\;\leq\;
\sup_{\theta\in V(\lambda_{1,0}^D+\cdots + \lambda_{r,0}^D)\cap\mathbb{S}^{2r-1}}\puRational(\theta)
\;\leq\; \sup_{\theta\in\mathbb{S}^{2r-1}}\puRational(\theta).\]
We set $X_j=|\theta_j|^2$. 
Then, we want to maximize 
\[\hat{\puRational}(X):=\frac{\sum_{\iWaring=1}^rX_\iWaring^{D-m}}{\sum_{\iWaring=1}^rX_\iWaring^{D-1}}\]
over the standard simplex 
\[\Delta^{r-1}:=\left\{X\in \mathbb{R}^r\mid \sum_{\iWaring=1}^r X_\iWaring=1, X_\iWaring\geq 0\right\}\]
in $\mathbb{R}^r$.
Note that the critical points happen when there are exactly $k$ nonzero coordinates which are all equal to $\frac{1}{k}$ for $1\leq k\leq r$. In this case, 
\[\hat{\puRational}\left(\frac{1}{k},\dots, \frac{1}{k},0,\dots, 0\right)=k^{m-1}.
\]
Furthermore, since $m\geq 1$, the maximum happens when $k=r$. Therefore, 
\begin{equation}\label{eq:spherical-part}
    \EE_\theta [\puRational(\theta)]
    \;\leq\;
    \sup_{\theta\in \mathbb{S}^{2r-1}}\puRational(\theta)
    \;=\;
    \sup_{X\in \Delta^{r-1}}\hat{\puRational}(X)
    \;=\;
    r^{m-1}.
\end{equation}

Combining \eqref{eq:radial-part} and \eqref{eq:spherical-part}, we have $$\int_{V(\lambda_{1,0}^D+\cdots + \lambda_{r,0}^D)} d'{\lambdazeros}q({\lambdazeros})\leq \prod_{j=2}^m\frac{r}{r-j} \leq\prod_{j=2}^D\frac{r}{r-j} $$
when $r>D$. We denote $\prod_{j=2}^D\frac{r}{r-j}$ by $R_{r,D}$.

Hence, we obtain the following bound: 
\begin{equation}\label{eq:ME-bound-RD}\mainExpectation\\\leq \frac{\pi R_{r,D}}{nD^2}{D \choose k}^2\sum_{m=1}^k {k\choose m}\frac{(m+n-1)!}{(n-1)!}.
\end{equation}

\subsection{Consequences}\label{ss:consequences}

The previous subsections have derived  \eqref{eq:ME-bound-RD}. In this subsection, we discuss its consequences. In particular we use \eqref{eq:ME-bound-RD} in the proof of our main result, and see the effects of Waring length in our complexity results.

\begin{theorem}\label{thm:waring-condition-bound}
Fix integers $n\geq 1$, $D\geq 2$, and $r>D$.
Let $f_L$ be a Gaussian Waring representation of length $r$. Then,
\[
\EE_L\big[\GammaFrob(f_L)^2\big]\leq 
\frac{\pi }{4} R_{r,D} (D-1)^3
n\left( 1+\frac{3}{n}\right)^2
\]
where $R_{r,D}:=\prod\limits_{j=2}^D\frac{r}{r-j}$.
\end{theorem}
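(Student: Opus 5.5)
The plan is to bound $\gammaFrob(f_L,\zeta)^2$ by a sum over $k$ and then apply \eqref{eq:ME-bound-RD} to each summand. Write
\[
A_k(L,\zeta):=\|d_zf_L(\zeta)\|^{-2}\left\|\tfrac{1}{k!}d_z^kf_L(\zeta)\right\|_{\Frob}^2 .
\]
By \eqref{eq:Frob-gamma}, $\gammaFrob(f_L,\zeta)^2=\sup_{2\le k\le D}A_k^{1/(k-1)}$, since $f_L$ has degree $D$ and the higher derivatives vanish. I will replace the supremum by the sum over $k=2,\dots,D$. Since $t\mapsto t^{1/(k-1)}$ is concave for $k\ge2$, Jensen's inequality gives
\[
\EE_L\big[\GammaFrob(f_L)^2\big]=\EE_{L,\zeta\in V(f_L)}\big[\gammaFrob(f_L,\zeta)^2\big]\le\sum_{k=2}^D\Big(\EE_{L,\zeta}[A_k]\Big)^{\frac1{k-1}} .
\]
The sum has $D-1$ terms. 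It therefore suffices to prove the uniform bound
\[
\Big(\EE_{L,\zeta}[A_k]\Big)^{\frac1{k-1}}\le \frac{\pi}{4}R_{r,D}(D-1)^2\frac{(n+3)^2}{n}\qquad(2\le k\le D),
\]
because multiplying by $D-1$ and writing $(n+3)^2/n=n(1+3/n)^2$ gives exactly the claimed bound.

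For $k=2$ the bound is a direct computation from \eqref{eq:ME-bound-RD}. The inner sum is $\binom21 n+\binom22 n(n+1)=n(n+3)$ and $\binom D2^2=D^2(D-1)^2/4$. This yields $\EE[A_2]\le\frac{\pi}{4}R_{r,D}(D-1)^2(n+3)$, which is even better than required by a factor $(n+3)/n$.

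For $k\ge3$ I will first bound the pieces of \eqref{eq:ME-bound-RD} crudely:
\[
\frac{(m+n-1)!}{(n-1)!}\le (n+k)^m,\qquad \sum_{m=1}^k\binom km (n+k)^m\le (n+k+1)^k,\qquad \binom Dk\le \frac{D(D-1)^{k-1}}{k!}.
\]
Together these give
\[
\EE[A_k]\le \frac{\pi R_{r,D}}{n}\,\frac{(D-1)^{2k-2}(n+k+1)^k}{(k!)^2}.
\]
Taking the $(k-1)$-th root, the factors $(\pi R_{r,D}/n)^{1/(k-1)}$ are at most $\pi R_{r,D}$, because $R_{r,D}\ge1$ and $n\ge1$. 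The quantity $(D-1)^2$ comes out exactly. What remains is to show that $(n+k+1)^{k/(k-1)}/(k!)^{2/(k-1)}$ is at most $\tfrac14(n+3)^2/n$, possibly after absorbing the small constant losses above.

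The main obstacle is this last elementary inequality, uniformly in $n\ge1$ and $3\le k\le D$. The $k!$ in the denominator has to compensate for the growth of $n+k+1$ in $k$, and the constants must come out no worse than the $k=2$ case. I would first try to show that the bound on $\EE[A_k]^{1/(k-1)}$ is nonincreasing in $k$ for $k\ge2$ by comparing consecutive ratios. If that is too lossy because of the $\pi^{1/(k-1)}$ and $n^{-1/(k-1)}$ factors, I would keep $\binom Dk^2$ and the exact sum $\sum_m\binom km (n+m-1)!/(n-1)!$ unsimplified and compare them term by term with the $k=2$ expression. The slack factor $(n+3)/n$ already present at $k=2$ should absorb these losses.
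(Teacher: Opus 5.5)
Your skeleton matches the paper's proof. You replace $\sup_k$ by $\sum_k$, apply Jensen to $t\mapsto t^{1/(k-1)}$, use \eqref{eq:ME-bound-RD} termwise, bound every term uniformly by the $k=2$ level $M_2=\frac{\pi}{4}(D-1)^2\frac{(n+3)^2}{n}$, and multiply by $D-1$. Your exact $k=2$ computation is correct and sharper than the paper's.

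The gap is the $k\ge3$ step. It is the only nontrivial part, you leave it open, and the route you outline cannot work. After $(\pi R_{r,D}/n)^{1/(k-1)}\le\pi R_{r,D}$, the residual inequality $(n+k+1)^{k/(k-1)}/(k!)^{2/(k-1)}\le\frac14(n+3)^2/n$ is false. For $k=3$ the left side grows like $n^{3/2}/6$ and the right side like $n/4$; already $n=4$ gives $8^{3/2}/6\approx3.77>49/16\approx3.06$. The factor $n^{1/(k-1)}$ you discarded is unbounded in $n$, so it is not a ``small constant loss''. The slack in your $k=2$ term cannot absorb it either, since each term with $k\ge3$ must be bounded on its own. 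The factor $\pi^{1/(k-1)}$, by contrast, costs nothing.

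The repair is to discard only $R_{r,D}^{1/(k-1)}\le R_{r,D}$ and keep $(\pi/n)^{1/(k-1)}$. Raising to the power $k-1$, the needed bound becomes
\[
4^{k-1}\,n^{k-2}\,(n+k+1)^k\;\le\;\pi^{k-2}\,(k!)^2\,(n+3)^{2k-2}.
\]
Two facts reduce this:
\[
n\le n+k+1,\qquad (k+2)(n+3)-4(n+k+1)=(k-2)(n-1)\ge0 .
\]
Hence the left side is at most $\bigl(\tfrac{k+2}{2}\bigr)^{2k-2}(n+3)^{2k-2}$. So it suffices to show $\bigl(\tfrac{k+2}{2}\bigr)^{k-1}\le\pi^{(k-2)/2}\,k!$.

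This holds for $k=3,4,5,6$ by direct check: $6.25\le10.6$, $27\le75.4$, $150.1\le668$, $1024\le7106$. For $k\ge6$ it follows by induction. Passing from $k$ to $k+1$ multiplies the left side by $\frac{k+3}{2}\bigl(\frac{k+3}{k+2}\bigr)^{k-1}<\frac{e(k+3)}{2}$ and the right side by $\sqrt{\pi}(k+1)$, and $\frac{e(k+3)}{2}\le\sqrt{\pi}(k+1)$ for $k\ge6$.

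This is a slight strengthening of the claim $M_k^{1/(k-1)}\le M_2$, which the paper asserts without proof. With it, your argument goes through.
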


\begin{proof}
Recall that \eqref{eq:ME-bound-RD} gives the bound
\begin{equation*}
\mainExpectation\\\leq \frac{\pi R_{r,D}}{nD^2}{D \choose k}^2\sum_{m=1}^k {k\choose m}\frac{(m+n-1)!}{(n-1)!}.
\end{equation*}
Since 
\[
 \sum_{m=1}^k {k \choose m}\frac{(m+n-1)!}{(n-1)!}
\;\leq\;
(n+k+1)^k,
\]
we have
\[\mainExpectation\leq R_{r,D}\cdot \frac{\pi }{nD^2}{D \choose k}^2
(n+k+1)^k.
\]
Motivated by this bound, define
\[
M_k := \frac{\pi}{nD^2}{D \choose k}^2 (n+k+1)^k,
\qquad k \ge 2.
\]
so  the expected squared condition number satisfies 
\[
\EE_L\big[\GammaFrob(f_L)^2\big]
\le \sum_{k=2}^D \left(R_{r,D} M_k\right)^{\frac{1}{k-1}}.
\]

Next, we bound $(M_k)^{\frac{1}{k-1}}$. Note that
$\{(M_k)^{\frac{1}{k-1}}\}_{k \ge 2}$ attains its maximum at $k=2$. Indeed,
\[
M_2
= \frac{\pi(D-1)^2(n+3)^2}{4n}.
\]
Hence, $(M_k)^{\frac{1}{k-1}} \leq M_2$ for all $k \geq 2$.

Since $R_{r,D} > 1$, we also have $R_{r,D}^{\frac{1}{k-1}} \leq R_{r,D}$ for all $k \geq 2$. Combining these estimates yields
\begin{align*}
\EE_L\big[\GammaFrob(f_L)^2\big]
\;\le\; \sum_{k=2}^D \left(R_{r,D} M_k\right)^{\frac{1}{k-1}} 
\;\le\; R_{r,D} \sum_{k=2}^D M_2 
\;\le\; R_{r,D} \frac{\pi(D-1)^3 (n+3)^2}{4n}.
\end{align*}
Finally, rewriting $(n+3)^2 = n^2 \left(1+\frac{3}{n}\right)^2$
gives the stated bound.
\end{proof}    

We note the impacts of $R_{r,D}$. The factor  $R_{r,D}$ captures the additional cost induced by the Waring structure. Since $R_{r,D} = 1 + O(\frac{1}{r})$ as $r \to \infty$ (for fixed $D$), this structural penalty vanishes when $r \gg D$. In particular, for sufficiently large $r$, the expected condition bound exhibits polynomial dependence on $n$ and $D$.

We extend this result to homogeneous systems that consist of Waring representations. Let $f=(f_{L^{(1)}},\dots, f_{L^{(n)}})\in H_{d_1}\times\cdots \times H_{d_n}$ be a homogeneous system defined with $L^{(i)}=(L^{(i)}_1,\dots, L^{(i)}_{r_i})$ for $i=1,\dots, n$. We define $D=\max\{d_1,\dots, d_n\}, R=\max_{i}\{R_{r_i,d_i}\}$ and
\[
\Gamma_W :=
\frac{\sqrt{\pi}}{2}\sqrt{R}(D-1)^{\frac{3}{2}}
n\left(1+\frac{3}{n}\right).
\]
$\Gamma_W$ induces an upper bound on the averaged condition number of the system $f$.

Then, the following provides the complexity of rigid homotopy for finding a root of the system $f$:
\begin{corollary}\label{cor:system}
Let $f=(f_{L^{(1)}},\dots,f_{L^{(n)}})\in H_{d_1}\times\cdots \times H_{d_n}$ be a homogeneous polynomial system of Waring representations such that each $f_{L^{(i)}}$ is independent Gaussian random polynomials with Waring representation of length $r_i$ satisfying $r_i>d_i$ and degree at most $D$. Let $E$ be an upper bound on the evaluation complexity of $f$. Then, \Cref{alg:rigid_continuation} on input $f$ and $\epsilon>0$, terminates after
\[
E\cdot p(n,D)\left(\Gamma_W\log \Gamma_W+\log\log \frac{1}{\epsilon}\right)
\]
operations on average for some polynomial $p$ in variables $n$ and $D$.
\end{corollary}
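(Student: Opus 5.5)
The plan is to deduce \Cref{cor:system} from the general black-box complexity theorem for \textsc{BoundedBlackBoxNC} in \cite{BCL2023-rigid-two}, the result behind \cite[Corollary 1.6]{BCL2023-rigid-two}, with \Cref{thm:waring-condition-bound} supplying the input that \Cref{thm:abp_gamma} supplies in the ABP case. That general result says the following. For a system $f$ with evaluation complexity $E$, \Cref{alg:rigid_continuation} started from a pair produced by \Cref{alg:rigid_sampling} terminates, on average over the sampled start pair, after
\[
E\cdot \mathrm{poly}(n,D)\left(\GammaFrob(f)\log \GammaFrob(f)+\log\log\tfrac{1}{\epsilon}\right)
\]
operations. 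Here the polynomial factor absorbs the $192n^2D$ overestimation factor from \Cref{alg:gamma-estimate}, the number $s$ of samples, and the cost of Newton steps and of computing $\kappa$. I will take this statement as given and only have to control the randomness of $f$ itself.

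\textbf{Step 1: bound the system quantity.} By \eqref{eq:averaged-gamma-system} and linearity of expectation, using independence only to apply \Cref{thm:waring-condition-bound} to each marginal,
\[
\EE\big[\GammaFrob(f)^2\big]=\sum_{i=1}^n \EE_{L^{(i)}}\big[\GammaFrob(f_{L^{(i)}})^2\big].
\]
Each summand is bounded by \Cref{thm:waring-condition-bound} with $(r,D)$ replaced by $(r_i,d_i)$. The bound is monotone in $d_i$ and in $R_{r_i,d_i}$, so each summand is at most $\frac{\pi}{4}R(D-1)^3 n(1+3/n)^2$. Summing over $n$ terms gives
\[
\EE\big[\GammaFrob(f)^2\big]\le \frac{\pi}{4}R(D-1)^3n^2\left(1+\tfrac{3}{n}\right)^2=\Gamma_W^2.
\]
Jensen's inequality then gives $\EE[\GammaFrob(f)]\le\Gamma_W$.

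\textbf{Step 2: pass the expectation through $x\log x$.} The function $x\mapsto x\log x$ is convex, so Jensen gives $\EE[\GammaFrob\log\GammaFrob]\ge \Gamma_W\log\Gamma_W$, which is the wrong direction. Instead I will use the concavity route already used in \cite{BCL2023-rigid-two}. Write $x\log x=\sqrt{x^2}\cdot\tfrac12\log x^2$ and apply Cauchy--Schwarz:
\[
\EE[x\log x]\le \EE[x^2]^{1/2}\,\EE\big[\tfrac14\log^2 x^2\big]^{1/2}.
\]
The function $y\mapsto\log^2 y$ is concave for $y\ge e$, so after replacing $x$ by $\max(x,e)$ Jensen applies and bounds the second factor by $O(\log\Gamma_W)$. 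Alternatively, $y\mapsto \sqrt y\log y$ is concave for large $y$, and applying Jensen directly to $Y=\GammaFrob(f)^2$ yields $O(\Gamma_W\log\Gamma_W+1)$. In either version the additive constants are absorbed into $p(n,D)$ because $\Gamma_W\ge 1$. The $\log\log(1/\epsilon)$ term does not depend on $f$ and passes through unchanged.

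\textbf{Main obstacle.} The delicate point is checking that the hypotheses of the general theorem in \cite{BCL2023-rigid-two} hold for Gaussian Waring systems. First, the components must almost surely define a zero set with $\sum_i\codim V(f_i)=n$ and a.s.\ nonsingular hypersurfaces, so that \Cref{alg:rigid_sampling} and the identity $\vol V(f_L)=D\vol(\PP^{n-1})$ apply. This holds on a dense open set of $L$, because $r>D$ makes a generic length-$r$ Waring form a generic enough form. Second, the general complexity bound must depend on $f$ only through $\GammaFrob(f)$ averaged over the rigid correspondence, with no other input-dependent quantity. If both checks hold, Steps 1 and 2 assemble directly into the stated bound.
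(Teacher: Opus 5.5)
Your proposal is correct and is essentially the paper's argument. Step~1 is exactly the paper's computation $\EE[\GammaFrob(f)^2]\le\sum_i\frac{\pi}{4}R_{r_i,d_i}(d_i-1)^3n(1+\frac{3}{n})^2\le\Gamma_W^2$, after which the paper simply invokes \cite[Corollary 1.3]{BCL2023-rigid-two}; that corollary is already stated for random inputs in terms of $\EE[\GammaFrob(f)^2]^{1/2}$, so your Step~2 only re-derives the Jensen-type averaging packaged inside it. One correction to your ``main obstacle'' paragraph: $r_i>d_i$ does not make $V(f_{L^{(i)}})$ generically smooth, since for $r_i\le n$ every $f_{L^{(i)}}$ is a cone singular along $\PP(\ker L^{(i)})$; the paper performs no such check and uses $r_i>d_i$ only to make the radial integrals and $R_{r_i,d_i}$ finite.
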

\begin{proof}
    Recall that for each $i$, we have
    \[\EE_{L^{(i)}}\big[\GammaFrob(f_{L^{(i)}})^2\big]\leq 
\frac{\pi}{4} R_{r_i,d_i} (d_i-1)^3
n\left( 1+\frac{3}{n}\right)^2\]
by \Cref{thm:waring-condition-bound}.
Note that 
\begin{equation*}
\EE_{\{L^{(i)}\}}[\GammaFrob(f)^2]=\EE_{\{L^{(i)}\}}\left[\sum_{i=1}^n\GammaFrob(f_{L^{(i)}})^2\right]    
\end{equation*}
by \eqref{eq:averaged-gamma-system}. Then, we have
\begin{align*}
\EE_{\{L^{(i)}\}}\left[\sum_{i=1}^n\GammaFrob(f_{L^{(i)}})^2\right] &=\sum_{i=1}^n\EE_{L^{(i)}}[\GammaFrob(f_{L^{(i)}})^2]\\&\leq\sum_{i=1}^n  \frac{\pi}{4} R_{r_i,d_i} (d_i-1)^3
n\left( 1+\frac{3}{n}\right)^2\\
&\leq n\cdot \frac{\pi}{4} R (D-1)^3
n\left( 1+\frac{3}{n}\right)^2
\end{align*}
Therefore, we have 
\begin{equation*}
\EE_{\{L^{(i)}\}}\left[\GammaFrob(f)^2\right]^{\frac{1}{2}}\leq \Gamma_W.
\end{equation*}
Applying \cite[Corollary 1.3]{BCL2023-rigid-two}, the result follows.
\end{proof}

\section{Computational results for Waring systems}\label{s:comp-results}

In this section, we conduct a series of experiments to examine the performance and behavior of rigid homotopy, with a particular focus on how the $\gamma$-based estimates relate to step size and path geometry. \Cref{ss:cr-random-systems} presents performance on random Waring systems, summarized in \Cref{tab:ndr-metrics}. \Cref{ss:estimates-empirical} studies the empirical behavior of $\GammaEstimate$ for individual instances. In \Cref{ss:cr-visualizing-cool-systems}, we visualize solution paths to illustrate how the $\gamma$-number varies along the trajectory. \Cref{ss:cr-heuristics} compares \Cref{alg:rigid_continuation} with continuation with heuristic step sizes, demonstrating that speedups are often possible without loss of accuracy. Finally, \Cref{ss:scalability-considerations} discusses limitations of the current approach and implications for scalability.

For all of our computational results, we use our implementation of \Cref{alg:rigid_sampling,alg:gamma-estimate,alg:rigid_continuation} in \texttt{Julia}. The implementation uses \texttt{Enzyme} \cite{NEURIPS2020_9332c513,10.1145/3458817.3476165} for automatic differentiation of polynomials and \texttt{FFTW} \cite{FFTW05} for computing the degree $k$ parts of polynomials in \Cref{alg:gamma-estimate}. Throughout the experiments, we use the constant $\epsilon = 10^{-8}$ and $K=10^6$ for $n=1$ and $K=10^7$ for $n=2$ for \Cref{alg:rigid_continuation}. The code is available~at 

\begin{center}
\url{https://github.com/abigailrjones/rigid-homotopies}\,.
\end{center}

\subsection{Performance of rigid homotopy on average}\label{ss:cr-random-systems}

We investigate the behavior of the rigid homotopy algorithm on random polynomial systems with Waring representations. 
For each triple $(n,D,r)$, we generate random Waring systems with length $r$ and degree $D$ polynomials, and apply the continuation. For each configuration, we record the number of iterations, average step size, the average $\GammaEstimate$, and the average condition number \eqref{eq:condition-number} along the path. The reported values are averages over 100 random trials. Results are summarized in \Cref{tab:ndr-metrics}.

\begin{table}[t]
    \centering
    \small\renewcommand{\arraystretch}{1.1}
\noindent\makebox[\textwidth]{
    \begin{tabular}{l c c c c c c c c}
        \toprule
        & \multicolumn{2}{c}{Iterations} & \multicolumn{2}{c}{\begin{tabular}[c]{@{}c@{}}Average  \\ step size\end{tabular}} & \multicolumn{2}{c}{\begin{tabular}[c]{@{}c@{}}Average  \\ $\GammaEstimate$\end{tabular}} & \multicolumn{2}{c}{\begin{tabular}[c]{@{}c@{}}Average \\ condition number\end{tabular}} \\
        \cmidrule(lr){2-3} \cmidrule(lr){4-5} \cmidrule(lr){6-7} \cmidrule(lr){8-9}
        $(n, D, r)$ & Mean & Median & Mean & Median & Mean & Median & Mean & Median \\
        \midrule
        $(1, 3, 4)$ & $3.19 \times 10^{4}$ & $2.88 \times 10^{4}$ & $3.37 \times 10^{-5}$ & $3.47 \times 10^{-5}$ & $133.88$ & $120.67$ & $1.0$ & $1.0$ \\
        $(1, 3, 5)$ & $3.05 \times 10^{4}$ & $2.69 \times 10^{4}$ & $3.53 \times 10^{-5}$ & $3.71 \times 10^{-5}$ & $127.78$ & $112.84$ & $1.0$ & $1.0$ \\
        $(1, 3, 6)$ & $2.91 \times 10^{4}$ & $2.62 \times 10^{4}$ & $3.63 \times 10^{-5}$ & $3.81 \times 10^{-5}$ & $121.89$ & $109.94$ & $1.0$ & $1.0$ \\
        $(1, 4, 5)$ & $4.27 \times 10^{4}$ & $3.98 \times 10^{4}$ & $2.46 \times 10^{-5}$ & $2.51 \times 10^{-5}$ & $178.87$ & $166.86$ & $1.0$ & $1.0$ \\
        $(1, 4, 6)$ & $4.39 \times 10^{4}$ & $3.97 \times 10^{4}$ & $2.40 \times 10^{-5}$ & $2.52 \times 10^{-5}$ & $184.0$ & $166.44$ & $1.0$ & $1.0$ \\
        $(1, 4, 7)$ & $4.05 \times 10^{4}$ & $3.77 \times 10^{4}$ & $2.54 \times 10^{-5}$ & $2.66 \times 10^{-5}$ & $169.8$ & $157.68$ & $1.0$ & $1.0$ \\
        $(1, 5, 6)$ & $5.87 \times 10^{4}$ & $5.20 \times 10^{4}$ & $1.81 \times 10^{-5}$ & $1.92 \times 10^{-5}$ & $245.86$ & $217.94$ & $1.0$ & $1.0$ \\
        $(1, 5, 7)$ & $5.86 \times 10^{4}$ & $5.39 \times 10^{4}$ & $1.81 \times 10^{-5}$ & $1.85 \times 10^{-5}$ & $245.48$ & $225.85$ & $1.0$ & $1.0$ \\
        $(1, 5, 8)$ & $5.53 \times 10^{4}$ & $5.18 \times 10^{4}$ & $1.88 \times 10^{-5}$ & $1.93 \times 10^{-5}$ & $231.6$ & $216.81$ & $1.0$ & $1.0$ \\
        $(2, 3, 4)$ & $3.37 \times 10^{5}$ & $2.76 \times 10^{5}$ & $3.68 \times 10^{-6}$ & $3.62 \times 10^{-6}$ & $459.92$ & $448.21$ & $2.37$ & $1.79$ \\
        $(2, 3, 5)$ & $3.16 \times 10^{5}$ & $2.57 \times 10^{5}$ & $4.00 \times 10^{-6}$ & $3.89 \times 10^{-6}$ & $447.74$ & $418.57$ & $2.05$ & $1.75$ \\
        $(2, 3, 6)$ & $3.32 \times 10^{5}$ & $2.71 \times 10^{5}$ & $3.81 \times 10^{-6}$ & $3.69 \times 10^{-6}$ & $408.03$ & $395.88$ & $2.21$ & $1.97$ \\
        $(2, 4, 5)$ & $5.01 \times 10^{5}$ & $4.38 \times 10^{5}$ & $2.40 \times 10^{-6}$ & $2.28 \times 10^{-6}$ & $617.64$ & $599.98$ & $2.4$ & $1.98$ \\
        $(2, 4, 6)$ & $4.44 \times 10^{5}$ & $3.83 \times 10^{5}$ & $2.68 \times 10^{-6}$ & $2.61 \times 10^{-6}$ & $597.94$ & $579.36$ & $2.43$ & $1.86$ \\
        $(2, 4, 7)$ & $4.18 \times 10^{5}$ & $3.46 \times 10^{5}$ & $2.94 \times 10^{-6}$ & $2.89 \times 10^{-6}$ & $581.29$ & $564.7$ & $2.13$ & $1.7$ \\
        $(2, 5, 6)$ & $6.58 \times 10^{5}$ & $5.53 \times 10^{5}$ & $1.89 \times 10^{-6}$ & $1.81 \times 10^{-6}$ & $779.84$ & $748.32$ & $2.55$ & $1.96$ \\
        $(2, 5, 7)$ & $6.35 \times 10^{5}$ & $5.16 \times 10^{5}$ & $2.06 \times 10^{-6}$ & $1.94 \times 10^{-6}$ & $748.54$ & $738.31$ & $2.36$ & $1.99$ \\
        $(2, 5, 8)$ & $5.74 \times 10^{5}$ & $5.20 \times 10^{5}$ & $2.01 \times 10^{-6}$ & $1.92 \times 10^{-6}$ & $746.77$ & $730.82$ & $2.28$ & $2.12$ \\
        \bottomrule
    \end{tabular}}
    \caption{Summary of metrics (iterations, step size, $\Gamma_{\mathrm{Estimate}}$, and condition number) for different $(n, D, r)$ configurations. Values are obtained by averaging over $100$ random systems for each configuration.    }
    \label{tab:ndr-metrics}
\end{table}

We read several trends from the data. The experimental results reveal a correlation between the step size and the estimated $\gamma$-numbers as expected. In particular, smaller step sizes correspond to the inverse dependence of the step size on the $\gamma$-number.

Comparing the cases $n=1$ and $n=2$, we observe an increase in the estimated $\Gamma$-values and iteration counts, reflecting the increased geometric complexity of higher-dimensional~systems. In contrast, the dependence on $r$ appears weak in these experiments, with no systematic growth as $r$ increases. This is consistent with the theoretical prediction that the dependence on $r$ decreases and becomes asymptotically negligible.

Finally, a few discrepancies between the mean and median values indicate the presence of occasional large outliers, suggesting that rare ill-conditioned instances may affect the algorithm's practical behavior. This is explored more in depth in \Cref{ss:estimates-empirical}.

\subsection{Estimates of $\gammaFrob$-number}\label{ss:estimates-empirical}

We investigate the behavior of \Cref{alg:gamma-estimate} with a series of numerical experiments. Fixing the degree $D$ with variables $z=(z_0,\dots, z_n)$, we vary the Waring representation length $r$ from $D+1$ to $100$. For each value of $r$, we generate $20$ random polynomials $f$ with a Waring representation of length $r$. For each such polynomial, we sample $20$ approximate roots $\zeta$ of~$f$.

For each pair $(f,\zeta)$, we compute the estimator $\GammaEstimate$ of $\gamma_{\mathrm{Frob}}(f,\zeta)$ using \Cref{alg:gamma-estimate}, and then take the square. Averaging these values over the sampled polynomials and roots, we observe an empirical behavior of $\GammaEstimate^2$ for polynomials with fixed $n$ and $D$. We perform this experiment for degrees $D=4$ and $D=5$. The results are shown in \Cref{fig:gamma_estimate2}, where the vertical axis displays the estimated average of $\GammaEstimate^2$ for each value of $r$.

\begin{figure}[!htbp]
    \centering
    \begin{subfigure}[b]{.89\textwidth}
        \centering
        \resizebox{\linewidth}{!}{\begin{tikzpicture}[scale=.7]
\begin{axis}[
    width=10cm, height=7cm,
    xlabel={$r$},
    ylabel={Average of $\GammaEstimate^2$},
    grid=major,
    legend pos=north east
]
\addplot[mark=*, blue, only marks, opacity=1] coordinates {
(5, 645.5617) (6, 5447.8733) (7, 1858.9247) (8, 5963.7578) (9, 675.4258) (10, 2118.5733) (11, 2907.7372) (12, 2343.647) (13, 64320.6808) (14, 477.7228) (15, 13482.1501) (16, 947.3326) (17, 450.822) (18, 11816.5683) (19, 35118.4783) (20, 1397.5333) (21, 466.5037) (22, 401.0752) (23, 362.1123) (24, 827.1659) (25, 372.4582) (26, 430.1983) (27, 283.9326) (28, 1047.383) (29, 1017.1835) (30, 448.9853) (31, 436.9891) (32, 348.7851) (33, 305.8139) (34, 1747.6618) (35, 918.7761) (36, 1371.8469) (37, 377.1273) (38, 330.2009) (39, 569.7187) (40, 409.5535) (41, 951.6676) (42, 371.8632) (43, 347.1887) (44, 1227.1964) (45, 427.5645) (46, 674.6588) (47, 297.4155) (48, 306.3453) (49, 940.249) (50, 361.3502) (51, 557.3659) (52, 311.1917) (53, 426.4473) (54, 34022.0123) (55, 328.8204) (56, 388.5177) (57, 458.3319) (58, 501.4842) (59, 336.7822) (60, 769.8601) (61, 425.7506) (62, 3293.1558) (63, 482.2235) (64, 402.3625) (65, 1321.5137) (66, 431.3051) (67, 878.9998) (68, 1024.9186) (69, 329.267) (70, 4603.5983) (71, 365.566) (72, 295.6534) (73, 332.9381) (74, 19188.8077) (75, 13592.3924) (76, 369.4175) (77, 1443.6717) (78, 879.8735) (79, 1179.181) (80, 12897.0695) (81, 394.846) (82, 467.8329) (83, 143966.0735) (84, 314.9919) (85, 416.6155) (86, 429.6865) (87, 450.3552) (88, 1526.5817) (89, 1426.2998) (90, 316.1106) (91, 378.7713) (92, 70276.3191) (93, 357.493) (94, 682.3093) (95, 316.3758) (96, 344.4638) (97, 451.5896) (98, 289.6763) (99, 402.4046) (100, 389.9495) 
};

\end{axis}

\end{tikzpicture}\hspace{1em}\begin{tikzpicture}[scale=.7]
\begin{axis}[
    width=10cm, height=7cm,
    xlabel={$r$},
    ylabel={Average of $\GammaEstimate^2$},
    grid=major,
    legend pos=north east
]
\addplot[mark=*, blue, only marks, opacity=1] coordinates {
(5, 645.5617) (6, 5447.8733) (7, 1858.9247) (8, 5963.7578) (9, 675.4258) (10, 2118.5733) (11, 2907.7372) (12, 2343.647) (14, 477.7228) (16, 947.3326) (17, 450.822) (20, 1397.5333) (21, 466.5037) (22, 401.0752) (23, 362.1123) (24, 827.1659) (25, 372.4582) (26, 430.1983) (27, 283.9326) (28, 1047.383) (29, 1017.1835) (30, 448.9853) (31, 436.9891) (32, 348.7851) (33, 305.8139) (34, 1747.6618) (35, 918.7761) (36, 1371.8469) (37, 377.1273) (38, 330.2009) (39, 569.7187) (40, 409.5535) (41, 951.6676) (42, 371.8632) (43, 347.1887) (44, 1227.1964) (45, 427.5645) (46, 674.6588) (47, 297.4155) (48, 306.3453) (49, 940.249) (50, 361.3502) (51, 557.3659) (52, 311.1917) (53, 426.4473) (55, 328.8204) (56, 388.5177) (57, 458.3319) (58, 501.4842) (59, 336.7822) (60, 769.8601) (61, 425.7506) (62, 3293.1558) (63, 482.2235) (64, 402.3625) (65, 1321.5137) (66, 431.3051) (67, 878.9998) (68, 1024.9186) (69, 329.267) (70, 4603.5983) (71, 365.566) (72, 295.6534) (73, 332.9381) (76, 369.4175) (77, 1443.6717) (78, 879.8735) (79, 1179.181) (81, 394.846) (82, 467.8329) (84, 314.9919) (85, 416.6155) (86, 429.6865) (87, 450.3552) (88, 1526.5817) (89, 1426.2998) (90, 316.1106) (91, 378.7713) (93, 357.493) (94, 682.3093) (95, 316.3758) (96, 344.4638) (97, 451.5896) (98, 289.6763) (99, 402.4046) (100, 389.9495)  
};

\end{axis}

\end{tikzpicture}}
        \caption{$n=2, D=4$}
    \end{subfigure}
    \begin{subfigure}[b]{.89\textwidth}
        \centering
        \resizebox{\linewidth}{!}{\begin{tikzpicture}[scale=.7]
\begin{axis}[
    width=10cm, height=7cm,
    xlabel={$r$},
    ylabel={Average of $\GammaEstimate^2$},
    grid=major,
    legend pos=north east
]
\addplot[mark=*, blue, only marks, opacity=1] coordinates {
(6, 4096.4296) (7, 3497.6448) (8, 809.3147) (9, 7453.34) (10, 81992.2864) (11, 2068.0461) (12, 10172.9752) (13, 5855.7671) (14, 330352.5368) (15, 521.6094) (16, 1717.2093) (17, 3896.9887) (18, 682.729) (19, 63155.6276) (20, 1196.5814) (21, 1079.4544) (22, 764.3417) (23, 805.833) (24, 2722.9913) (25, 1564.1797) (26, 4015.8811) (27, 31313.3861) (28, 2318.1202) (29, 772137.8244) (30, 998.3947) (31, 434887.9841) (32, 786.9879) (33, 2274.9024) (34, 2863.5081) (35, 1556.1012) (36, 535.6399) (37, 28234.1718) (38, 694.0526) (39, 3.717619281256046e11) (40, 1249.2934) (41, 1222.5574) (42, 639.1423) (43, 7581.0992) (44, 12146.596) (45, 1638.7763) (46, 1832.3996) (47, 1127.7228) (48, 956.8939) (49, 3861.9978) (50, 43611.1613) (51, 1643.9786) (52, 5659.0326) (53, 639.8823) (54, 1071.8346) (55, 1143.3907) (56, 26508.7395) (57, 462.2703) (58, 8929.2893) (59, 28139.3257) (60, 1275.7047) (61, 745.6566) (62, 1167.8861) (63, 620.7622) (64, 2515.3741) (65, 10656.2711) (66, 2061.4618) (67, 17158.8158) (68, 478.8959) (69, 5398.3491) (70, 787.9574) (71, 688.2826) (72, 7133.232) (73, 909.4528) (74, 9152.3197) (75, 513.0483) (76, 1779.8825) (77, 3245.8616) (78, 541.608) (79, 887.8417) (80, 537.6382) (81, 2867.4008) (82, 2140.8077) (83, 3091.5186) (84, 3420.5619) (85, 96274.0227) (86, 987.4844) (87, 1044.0125) (88, 2066.4705) (89, 1420.1269) (90, 2757.7723) (91, 22192.2788) (92, 3455.9532) (93, 6735.9005) (94, 562.8469) (95, 5871.0994) (96, 552.5235) (97, 1397.8718) (98, 980.3407) (99, 13697.847) (100, 1096.8843) 
};

\end{axis}

\end{tikzpicture}\hspace{1em}\begin{tikzpicture}[scale=.7]
\begin{axis}[
    width=10cm, height=7cm,
    xlabel={$r$},
    ylabel={Average of $\GammaEstimate^2$},
    grid=major,
    legend pos=north east
]
\addplot[mark=*, blue, only marks, opacity=1] coordinates {
(6, 4096.4296) (7, 3497.6448) (8, 809.3147) (9, 7453.34) (11, 2068.0461) (12, 10172.9752) (13, 5855.7671) (15, 521.6094) (16, 1717.2093) (17, 3896.9887) (18, 682.729) (20, 1196.5814) (21, 1079.4544) (22, 764.3417) (23, 805.833) (24, 2722.9913) (25, 1564.1797) (26, 4015.8811) (28, 2318.1202) (30, 998.3947) (32, 786.9879) (33, 2274.9024) (34, 2863.5081) (35, 1556.1012) (36, 535.6399) (38, 694.0526) (40, 1249.2934) (41, 1222.5574) (42, 639.1423) (43, 7581.0992) (44, 12146.596) (45, 1638.7763) (46, 1832.3996) (47, 1127.7228) (48, 956.8939) (49, 3861.9978) (51, 1643.9786) (52, 5659.0326) (53, 639.8823) (54, 1071.8346) (55, 1143.3907) (56, 26508.7395) (57, 462.2703) (58, 8929.2893) (59, 28139.3257) (60, 1275.7047) (61, 745.6566) (62, 1167.8861) (63, 620.7622) (64, 2515.3741) (65, 10656.2711) (66, 2061.4618) (67, 17158.8158) (68, 478.8959) (69, 5398.3491) (70, 787.9574) (71, 688.2826) (72, 7133.232) (73, 909.4528) (74, 9152.3197) (75, 513.0483) (76, 1779.8825) (77, 3245.8616) (78, 541.608) (79, 887.8417) (80, 537.6382) (81, 2867.4008) (82, 2140.8077) (83, 3091.5186) (84, 3420.5619) (86, 987.4844) (87, 1044.0125) (88, 2066.4705) (89, 1420.1269) (90, 2757.7723) (91, 22192.2788) (92, 3455.9532) (93, 6735.9005) (94, 562.8469) (95, 5871.0994) (96, 552.5235) (97, 1397.8718) (98, 980.3407) (99, 13697.847) (100, 1096.8843) 
};

\end{axis}

\end{tikzpicture}}
        \caption{$n=2, D=5$}
    \end{subfigure}
    \begin{subfigure}[b]{.89\textwidth}
        \centering
        \resizebox{\linewidth}{!}{\begin{tikzpicture}[scale=.7]
\begin{axis}[
    width=10cm, height=7cm,
    xlabel={$r$},
    ylabel={Average of $\GammaEstimate^2$},
    grid=major,
    legend pos=north east
]
\addplot[mark=*, blue, only marks, opacity=1] coordinates {
(5, 3492.7867) (6, 5098.9121) (7, 3324.8629) (8, 2695.7196) (9, 3499.6569) (10, 2249.1798) (11, 3139.4922) (12, 3529.1721) (13, 8505.7678) (14, 2210.626) (15, 3958.1048) (16, 2404.7047) (17, 2206.0547) (18, 25326.132) (19, 2163.6986) (20, 5249.6657) (21, 2595.7797) (22, 3495.5804) (23, 1990.4671) (24, 2415.1105) (25, 3481.1528) (26, 2495.2656) (27, 1926.9881) (28, 9775.0114) (29, 2500.776) (30, 2713.19) (31, 1907.6926) (32, 2066.9946) (33, 2458.5405) (34, 3022.3854) (35, 3523.2971) (36, 4040.8763) (37, 2981.6754) (38, 1897.5855) (39, 2292.0795) (40, 2007.489) (41, 2229.9708) (42, 3553.7796) (43, 2046.471) (44, 2480.7478) (45, 1950.1337) (46, 2289.6671) (47, 2078.8643) (48, 1608.6021) (49, 1944.9722) (50, 2371.1377) (51, 1634.9037) (52, 1748.398) (53, 4663.1207) (54, 3722.267) (55, 1969.2126) (56, 2284.5241) (57, 5791.24) (58, 2098.1812) (59, 2766.316) (60, 1828.8115) (61, 2188.1527) (62, 3385.2218) (63, 3525.6189) (64, 3175.7065) (65, 1821.021) (66, 3756.1825) (67, 2021.2069) (68, 4844.698) (69, 1985.763) (70, 1885.282) (71, 2009.116) (72, 1837.5564) (73, 1909.117) (74, 1783.4092) (75, 2944.5011) (76, 2481.2603) (77, 2081.7423) (78, 3010.36) (79, 1883.3174) (80, 2665.8259) (81, 1943.167) (82, 3926.8173) (83, 3455.1892) (84, 1996.4519) (85, 1664.8973) (86, 2743.4612) (87, 2907.8051) (88, 1939.9131) (89, 1874.5624) (90, 2088.7911) (91, 2166.4928) (92, 2256.4287) (93, 1979.5113) (94, 2804.0193) (95, 3450.8725) (96, 2650.5937) (97, 2128.6628) (98, 2154.2914) (99, 3641.9359) (100, 1763.1572) 
};

\end{axis}

\end{tikzpicture}\hspace{1em}\begin{tikzpicture}[scale=.7]
\begin{axis}[
    width=10cm, height=7cm,
    xlabel={$r$},
    ylabel={Average of $\GammaEstimate^2$},
    grid=major,
    legend pos=north east
]
\addplot[mark=*, blue, only marks, opacity=1] coordinates {
(5, 3492.7867) (7, 3324.8629) (8, 2695.7196) (9, 3499.6569) (10, 2249.1798) (11, 3139.4922) (12, 3529.1721) (14, 2210.626) (16, 2404.7047) (17, 2206.0547) (19, 2163.6986) (21, 2595.7797) (22, 3495.5804) (23, 1990.4671) (24, 2415.1105) (25, 3481.1528) (26, 2495.2656) (27, 1926.9881) (29, 2500.776) (30, 2713.19) (31, 1907.6926) (32, 2066.9946) (33, 2458.5405) (34, 3022.3854) (35, 3523.2971) (37, 2981.6754) (38, 1897.5855) (39, 2292.0795) (40, 2007.489) (41, 2229.9708) (43, 2046.471) (44, 2480.7478) (45, 1950.1337) (46, 2289.6671) (47, 2078.8643) (48, 1608.6021) (49, 1944.9722) (50, 2371.1377) (51, 1634.9037) (52, 1748.398) (55, 1969.2126) (56, 2284.5241) (58, 2098.1812) (59, 2766.316) (60, 1828.8115) (61, 2188.1527) (62, 3385.2218) (63, 3525.6189) (64, 3175.7065) (65, 1821.021) (67, 2021.2069) (69, 1985.763) (70, 1885.282) (71, 2009.116) (72, 1837.5564) (73, 1909.117) (74, 1783.4092) (75, 2944.5011) (76, 2481.2603) (77, 2081.7423) (78, 3010.36) (79, 1883.3174) (80, 2665.8259) (81, 1943.167) (83, 3455.1892) (84, 1996.4519) (85, 1664.8973) (86, 2743.4612) (87, 2907.8051) (88, 1939.9131) (89, 1874.5624) (90, 2088.7911) (91, 2166.4928) (92, 2256.4287) (93, 1979.5113) (94, 2804.0193) (95, 3450.8725) (96, 2650.5937) (97, 2128.6628) (98, 2154.2914) (100, 1763.1572) 
};

\end{axis}

\end{tikzpicture}}
        \caption{$n=3, D=4$}
    \end{subfigure}
    \begin{subfigure}[b]{.89\textwidth}
        \centering
        \resizebox{\linewidth}{!}{\begin{tikzpicture}[scale=.7]
\begin{axis}[
    width=10cm, height=7cm,
    xlabel={$r$},
    ylabel={Average of $\GammaEstimate^2$},
    grid=major,
    legend pos=north east
]
\addplot[mark=*, blue, only marks, opacity=1] coordinates {
(6, 4489.8812) (7, 8761.5054) (8, 7563.794) (9, 10641.5622) (10, 8183.8476) (11, 8145.4969) (12, 5385.3804) (13, 5895.3976) (14, 4580.5721) (15, 5288.5137) (16, 3687.8464) (17, 7536.4082) (18, 24765.203) (19, 139641.2089) (20, 3984.9352) (21, 6855.9719) (22, 5816.4307) (23, 4594.3515) (24, 5439.6665) (25, 4697.5229) (26, 5523.3899) (27, 3536.9897) (28, 5774.2787) (29, 4638.9843) (30, 9893.6559) (31, 4458.4331) (32, 4886.8664) (33, 6895.1911) (34, 11153.1402) (35, 6146.0975) (36, 6774.1936) (37, 6414.9045) (38, 4941.7717) (39, 15808.6995) (40, 6585.7939) (41, 8687.7372) (42, 4945.0997) (43, 7182.6157) (44, 8114.9807) (45, 9373.7454) (46, 9622.3865) (47, 8220.3901) (48, 7679.1713) (49, 21552.8463) (50, 5719.1321) (51, 3724.6796) (52, 9075.7085) (53, 5322.0356) (54, 4668.298) (55, 4388.875) (56, 4949.4502) (57, 5344.0218) (58, 3845.6227) (59, 6046.7662) (60, 11267.3257) (61, 5816.1952) (62, 4088.9705) (63, 6181.1906) (64, 4313.1368) (65, 4927.7864) (66, 7369.1977) (67, 5138.7677) (68, 7138.8928) (69, 9481.7659) (70, 3202.6075) (71, 4172.9391) (72, 7288.7945) (73, 6789.9229) (74, 3350.7092) (75, 4188.4287) (76, 4373.5609) (77, 4180.3348) (78, 4417.9113) (79, 4954.7796) (80, 5946.8569) (81, 3670.2975) (82, 3002.1782) (83, 10668.3833) (84, 9192.4597) (85, 4447.8027) (86, 4923.3719) (87, 5129.4287) (88, 3308.6746) (89, 3925.31) (90, 3193.0574) (91, 4103.3762) (92, 14004.8268) (93, 4189.5285) (94, 4143.2355) (95, 5411.6321) (96, 4374.7236) (97, 4019.2449) (98, 6097.4317) (99, 3910.3566) (100, 4168.1945) 
};

\end{axis}

\end{tikzpicture}\hspace{1em}\begin{tikzpicture}[scale=.7]
\begin{axis}[
    width=10cm, height=7cm,
    xlabel={$r$},
    ylabel={Average of $\GammaEstimate^2$},
    grid=major,
    legend pos=north east
]
\addplot[mark=*, blue, only marks, opacity=1] coordinates {
(6, 4489.8812) (7, 8761.5054) (8, 7563.794) (10, 8183.8476) (11, 8145.4969) (12, 5385.3804) (13, 5895.3976) (14, 4580.5721) (15, 5288.5137) (16, 3687.8464) (17, 7536.4082) (20, 3984.9352) (21, 6855.9719) (22, 5816.4307) (23, 4594.3515) (24, 5439.6665) (25, 4697.5229) (26, 5523.3899) (27, 3536.9897) (28, 5774.2787) (29, 4638.9843) (31, 4458.4331) (32, 4886.8664) (33, 6895.1911) (35, 6146.0975) (36, 6774.1936) (37, 6414.9045) (38, 4941.7717) (40, 6585.7939) (41, 8687.7372) (42, 4945.0997) (43, 7182.6157) (44, 8114.9807) (47, 8220.3901) (48, 7679.1713) (50, 5719.1321) (51, 3724.6796) (53, 5322.0356) (54, 4668.298) (55, 4388.875) (56, 4949.4502) (57, 5344.0218) (58, 3845.6227) (59, 6046.7662) (61, 5816.1952) (62, 4088.9705) (63, 6181.1906) (64, 4313.1368) (65, 4927.7864) (66, 7369.1977) (67, 5138.7677) (68, 7138.8928) (70, 3202.6075) (71, 4172.9391) (72, 7288.7945) (73, 6789.9229) (74, 3350.7092) (75, 4188.4287) (76, 4373.5609) (77, 4180.3348) (78, 4417.9113) (79, 4954.7796) (80, 5946.8569) (81, 3670.2975) (82, 3002.1782) (85, 4447.8027) (86, 4923.3719) (87, 5129.4287) (88, 3308.6746) (89, 3925.31) (90, 3193.0574) (91, 4103.3762) (93, 4189.5285) (94, 4143.2355) (95, 5411.6321) (96, 4374.7236) (97, 4019.2449) (98, 6097.4317) (99, 3910.3566) (100, 4168.1945) 
};

\end{axis}

\end{tikzpicture}}
        \caption{$n=3, D=5$}
    \end{subfigure}
    \caption{Estimates of $\gammaFrob^2(f,\zeta)$ for fixed $(n,D)$. Left: raw data. Right: the same data with the largest $15$ values removed.}
    \label{fig:gamma_estimate2}
\end{figure}

Recall that \Cref{alg:gamma-estimate} returns an estimate $\GammaEstimate$ satisfying \eqref{eq:theoretical-guarantee-example}
with probability at least $1-\epsilon$.  As the upper bound involves a large multiplicative constant, and the estimate has positive probability of failure, occasional large values are observed in the empirical data. In all experiments, most samples lie in a relatively moderate range except for these small number of extreme values that dominate the averages.

To better visualize the typical behavior, we also display versions of the plots with the largest $15$ values removed. In these trimmed plots, the estimates remain comparatively stable across the range of Waring lengths considered, without showing a pronounced growth with respect to $r$. This is consistent with the trend predicted by \Cref{thm:waring-condition-bound}, where the dependence on the Waring length becomes asymptotically negligible.

\subsection{Visualizing roots}\label{ss:cr-visualizing-cool-systems}

We illustrate how $\gammaHatFrob$ in \Cref{alg:rigid_continuation} varies along a path tracked by a root. $\gammaHatFrob$ is a natural value to monitor while tracking, as it incorporates the $\GammaEstimate$ values for each polynomial in the system and is a main component when computing the step size. In \Cref{fig:zero-path}, we run rigid homotopy continuation on systems with configurations $(n,D,r)=(2,3,4)$ and $(n,D,r)=(2,4,5)$. Each curve is colored according to the value of $\gammaHatFrob$; lighter coloring indicates a larger $\gammaHatFrob$. These curves are also projected onto the base for clarity.

Note that larger $\gammaHatFrob$ values often occur in places where at least one path has higher curvature. This matches our expectation, since we must generally take smaller steps to resolve regions with rapid change. Other factors may also contribute to large $\gammaHatFrob$ values (for instance, proximity to singularities). We provide these visualizations as a qualitative view of how $\gammaHatFrob$ evolves along solution paths.

\begin{figure}[hbtp!]
    \centering
    \begin{subfigure}[b]{\textwidth}
        \centering
        \includegraphics[width=0.85\linewidth]{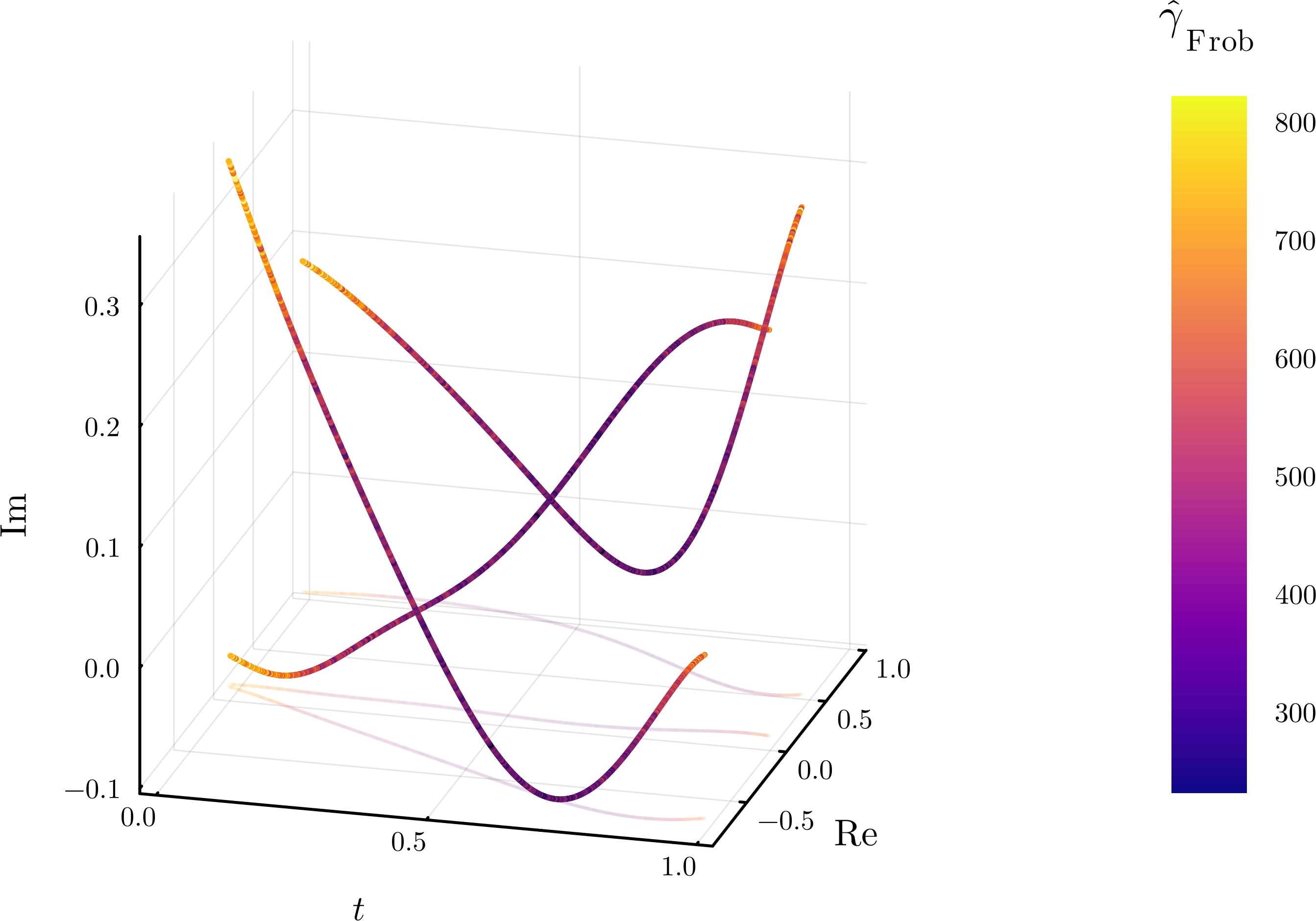}
        \caption{$n=2, D=3, r=4$}
    \end{subfigure}
    \begin{subfigure}[b]{\textwidth}
        \centering
        \includegraphics[width=0.85\linewidth]{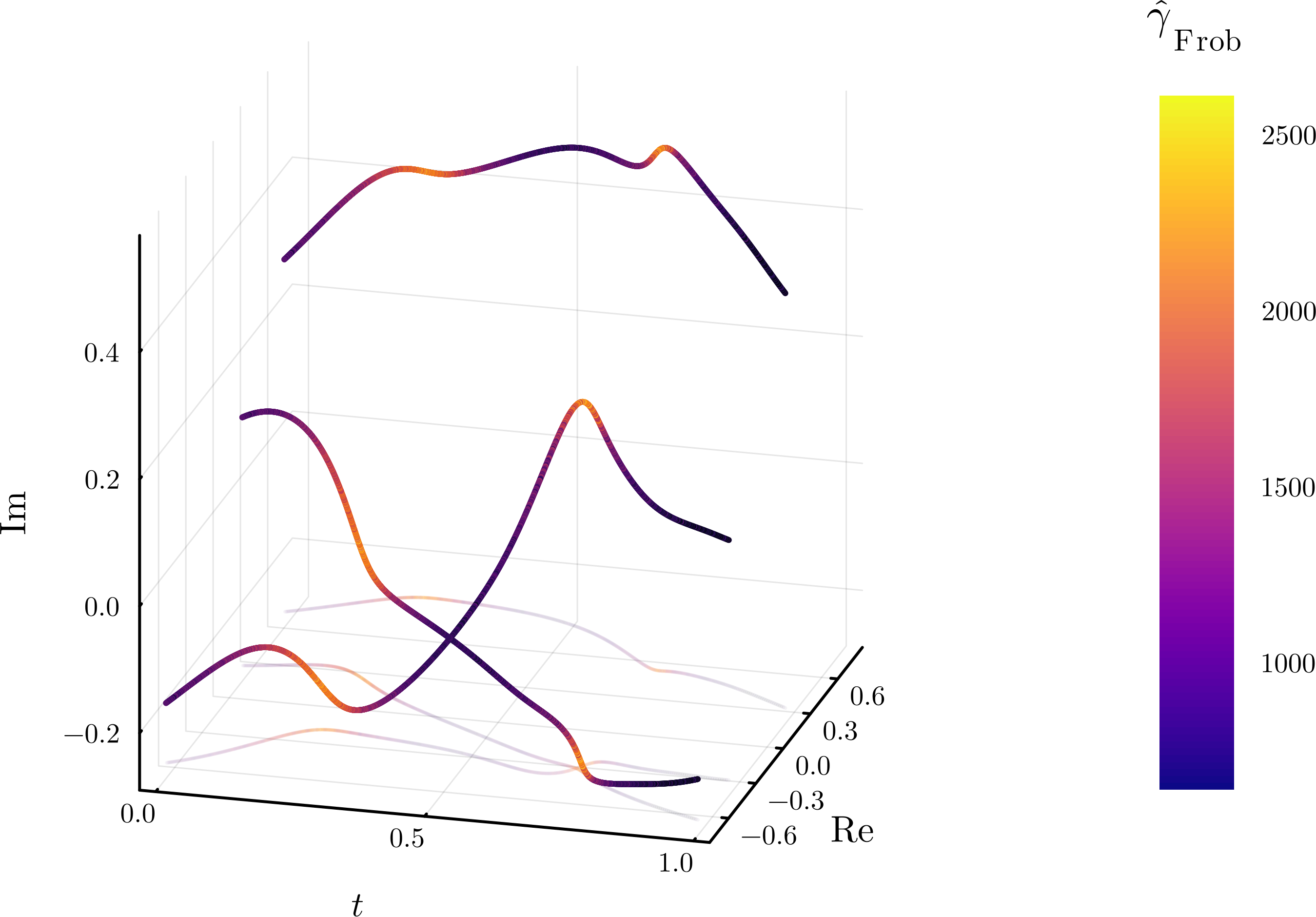}
        \caption{$n=2, D=4, r=5$}
    \end{subfigure}
    \caption{An illustration of a root tracked along a rigid homotopy path for
    two random Waring systems with the given configurations. Each curve
    represents a single coordinate of the root. Top: required
    approximately $1.66 \times 10^5$ iterations to complete. Bottom: required approximately $7.67 \times 10^5$ iterations to complete.}
    \label{fig:zero-path}
\end{figure}

\subsection{Comparison with
heuristic step size selection}\label{ss:cr-heuristics}

We compare \Cref{alg:rigid_continuation} with path tracking using heuristic step sizes. To evaluate the performance of the heuristic approach, we generate $100$ Waring polynomials with configuration $(n,D,r) = (1,3,4)$. For each instance, we sample a starting root and construct a rigid homotopy path. We then track this fixed path using constant heuristic step sizes of the form $10^{-j}$ for $j \in \{1,2,3,4,5\}$.

For each run, we compare the root returned by \Cref{alg:rigid_continuation} with that obtained using the heuristic step size. If the magnitude of the difference between the two is less than $10^{-14}$, we consider the heuristic method to be successful. We report the success rate for each heuristic step size in \Cref{tab:heuristic-vs-certified}.

\begin{table}[ht]
    \centering
    \begin{tabular}{l  c c c c c}
        \toprule
        $j$ & $1$ & $2$ & $3$ & $4$ & $5$  \\
        \midrule
        \begin{tabular}[c]{@{}l@{}}success  \\ rate ($\%$)\end{tabular} & 98 & 100 & 100 & 100 & 100  \\
        \bottomrule
    \end{tabular}
    \caption{Rigid homotopy tracking with a constant heuristic step size of $10^{-j}$. For each $j$, we report the success rate that it converges to the same point obtained by \Cref{alg:rigid_continuation}.}
    \label{tab:heuristic-vs-certified}
    % data in examples/data/complexity/data_final_heuristic_4_*.txt
\end{table}

For this system, the average step size from \Cref{alg:rigid_continuation} is approximately $4.17\times 10^{-5}$. 
In contrast, the constant step sizes $10^{-2}$, $10^{-3}$, and $10^{-4}$ achieved $100\%$ success while being orders of magnitude larger than this average. The smaller step size $10^{-5}$ also achieved $100\%$ success, as expected.

Overall, the results suggest that constant step sizes are often successful in this setting. Moreover, heuristic methods offer advantages in efficiency: larger step sizes reduce the number of continuation steps, and bypassing the computation of $\Gamma_{\mathrm{estimate}}$ further decreases the per-step cost. This motivates the development of robust rigid homotopy tracking methods that allow more flexible step size choices without sacrificing reliability.

\subsection{Remarks on scalability and practical considerations}\label{ss:scalability-considerations}

Most of our experiments are conducted for relatively small $n$ and $D$. 
As $n$ and $D$ increase, we observe that \Cref{alg:rigid_continuation} becomes slower and, in some cases, does not terminate within a reasonable time.  
A primary contributor is the conservative nature of the step size construction in \Cref{alg:gamma-estimate}, which can force excessively small steps (and thus a large number of continuation iterations) that then increase the overall computational burden. 
These observations emphasize the need for more robust and adaptive step size selection strategies in practical implementations, as suggested in \Cref{ss:cr-heuristics}.
    
    \newcommand{\real}{{\operatorname{real}}}
    \newcommand{\imag}{{\operatorname{imag}}}
    \newcommand{\jj}{\texttt{j}}
\newcommand{\rank}{\operatorname{rank}}
\newcommand{\Span}{\operatorname{span}}
\newcommand{\QQ}{\mathbb{Q}}
\newcommand{\ZZ}{\mathbb{Z}}

\section{Conclusion and outlook}\label{s:conclusion-outlook}

In this article we presented theoretical analysis and computational results for systems with Waring representations. Our main result is on the $\gamma$-number for these systems of equations through an analysis of
\eqref{eq:main-expectation}
for rigid homotopy continuation methods. 
In addition, we provided empirical results to complement the theory. 
To conclude, we present an outlook for further research directions. 

\begin{description}
    \item[Certifiable and efficient tracking.] 
    We considered the group $\UBigGroup = U(n+1)^n$, acting on each polynomial $f_i$ for $i=1,\dots,n$. More generally, rigid homotopy continuation applies to a polynomial system $f_i: \CC^{N+1} \to \CC^{n_i}$, with associated varieties $V(f_i)\subset \PP^N$ satisfying $\sum \codim{V(f_i)} = N$. 
    A natural direction for future work is to estimate $\gamma(f_i,\zeta)$ and determine $\GammaFrob(f_i)$ for classes of varieties $V(f_i)$ arising in applied algebraic geometry. However, estimating $\gammaFrob$ using \Cref{alg:gamma-estimate} may significantly overestimate  the value, as illustrated in \Cref{ex:estimateing-for-conic}. This suggests that more refined, variety-specific approaches to estimating $\gammaFrob$ would be beneficial. Such sharper estimates could in turn lead to more efficient and certified rigid homotopy tracking.

\item[Interval arithmetic-based implementation.] 
Even more robust rigid homotopy tracking can be achieved using interval arithmetic. Given recent advances \cite{DL2024-Krawczyk,guillemot2025certified,GL2024-rust-validated-higher-order}, interval-based methods enable certified evaluation and root tracking without relying on idealized assumptions such as the BSS model \cite{blum1989theory}. This provides a pathway toward practical implementations of rigid homotopy that preserve the correctness guarantees of the theoretical framework.

\item[Other efficient evaluation models: polynomial neural networks.] 
\newcommand{\btheta}{\mathbf{\theta}}
\newcommand{\bw}{\mathbf{w}}

One common efficient evaluation model in machine learning is a feed-forward neural network. In general, an $L$-layer \demph{feedforward neural network} $F_\btheta:\mathbb{R}^{d_0}\to\mathbb{R}^{d_L}$ is a composition of affine-linear maps $A_i:\mathbb{R}^{d_{i-1}}\to\mathbb{R}^{d_i}$ and non-linear maps $\sigma_i:\mathbb{R}^{d_i}\to\mathbb{R}^{d_i}$, 
\begin{equation*}
    F_\btheta(x)=(A_L\circ\sigma_{L-1}\circ A_{L-1}\circ\cdots\circ A_2\circ\sigma_1\circ A_1)(x).
\end{equation*}
The \demph{architecture} of the neural network $F_\btheta$ is the sequence $\mathbf{d} = (d_0,d_1,\dotsc,d_L)$. 
The \demph{activation map} $\sigma_i:\mathbb{R}^{d_i}\to\mathbb{R}^{d_i}$ is given coordinate-wise by the \demph{activation function}. When the activation function is a monomial, the neural network is known as a polynomial neural network (PNN). 
Our results can be viewed as completing the analysis of rigid homotopy methods when each polynomial in the system is a  PNN with architecture $(n+1,r,1)$.
Generalizing our complexity results to other
evaluation models used in machine learning is another important direction for future research.

\item[Monodromy and discriminant loci.] 
When homotopy continuation methods are used to compute all solutions of a system, one typically constructs a homotopy by choosing an appropriate start system, often based on the expected number of solutions. In contrast,
rigid homotopy does not come with a natural choice of start system. This absence makes it more challenging to apply rigid homotopy to tasks that require computing all solutions of a system. 
In this context, monodromy methods \cite{DHJLLS2019-monodromy} provide a useful heuristic for exploring the solution set. Also, monodromy techniques offer insight into the geometric structure of the problem, including symmetries and the permutations between solutions \cite{duff2023using,duff2022galois,duff2026certifying}. It will therefore be interesting to study the monodromy actions arising from the unitary transformations underlying rigid homotopy.

One possible approach is to characterize the discriminant loci of rigid homotopies and to build methods using this knowledge. 
To that end, a next step in this direction is to develop algorithms to compute the discriminant locus of rigid homotopies effectively. 
\end{description}

\subsubsection*{Acknowledgments}
We are very thankful for the comments from Peter B{\"u}rgisser 
and 
Alperen Erg{\"u}r at the early stages of this project.
{This research was partially supported by the Alfred P. Sloan Foundation and by the National Science Foundation Grant No. 2510307.}

\bibliographystyle{siam-no-dash-title-color-links}  \bibliography{refs}

\bigskip \medskip \bigskip

\noindent
\footnotesize {\bf Authors' addresses:}
\smallskip

\noindent Abigail R. Jones, University of Wisconsin--Madison, USA \hfill {\tt  abigailjones@math.wisc.edu} \url{https://abigailrjones.github.io/}

\smallskip
\noindent Kisun Lee, 
Clemson University, USA
\hfill {\tt  kisunl@clemson.edu}\newline
\url{https://klee669.github.io}

\smallskip
\noindent Jose Israel Rodriguez, University of Wisconsin--Madison, USA \hfill {\tt  jose@math.wisc.edu}\newline
\url{https://sites.google.com/wisc.edu/jose/}
\end{document}